\documentclass[]{article}

\usepackage{geometry}
\usepackage{booktabs}
\usepackage{graphicx}
\usepackage{placeins}

\usepackage{mathtools} 
\usepackage{mathrsfs}
\usepackage{amsmath}
\usepackage{amsfonts}
\usepackage{amssymb}
\usepackage{xcolor}

\usepackage{amsthm}
\theoremstyle{definition}

\newtheorem{thm}{Theorem}[section]
\newtheorem{prop}[thm]{Proposition}
\newtheorem{exmp}[thm]{Example}
\newtheorem{cor}[thm]{Corollary}
\newtheorem{lemma}[thm]{Lemma}

\newtheorem{defn}[thm]{Definition}
\newtheorem{rmk}[thm]{Remark}

\usepackage{tikz}
\usetikzlibrary{arrows.meta}
\tikzset{
	observed/.style = {circle, draw, fill = black, minimum size = 6pt, inner sep = 0pt},
	forced/.style = {circle, draw, fill = white, minimum size = 8pt, inner sep = 0pt, font = \scriptsize}
}
\usepackage{float}

\usepackage{csquotes}
\usepackage[
backend=biber,
style=numeric,
sorting=none,
doi=true,
url=false,
isbn=false
]{biblatex}
\usepackage{hyperref}
\hypersetup{hidelinks}

\newcommand{\diag}{\text{diag}}

\newcommand{\Disc}{\text{Disc}}
\newcommand{\rank}{\text{rank}}
\newcommand{\linspan}{\text{span}}

\newcommand{\norm}[1]{\left\lVert#1\right\rVert}

\title{Simultaneous Recovery of Wave Speeds and Initial Sources on Graphs via Redundant Frontier Propagation}
\author{Amir Moradifam\footnote{Department of Mathematics, University of California, Riverside, California, USA. E-mail: amirm@ucr.edu} \and Oliver P. Waite\footnote{Department of Mathematics, University of California, Riverside, California, USA. E-mail: oliver.waite@email.ucr.edu}}
\date{}

\begin{document}
	
	\maketitle
	
	\begin{abstract}
		We study the simultaneous recovery of an unknown initial signal and unknown vertex wave speeds for the wave equation on a finite graph from time-dependent measurements on a prescribed set of vertices. With the graph geometry known, we first show that a single experiment determines both quantities under a new two-witness zero forcing condition and generic nondegeneracy assumptions. We then develop a more general frontier-propagation framework in which the redundancy required for simultaneous recovery is distributed across successive graph contours rather than localized at individual forcing vertices. We derive Lipschitz stability estimates for the frontier propagation problem. We also show the recovery and stability results extend to positively weighted graphs. \\
	\end{abstract}
	
	
	\section{Introduction}
	Consider the wave equation
	\begin{equation}
		\begin{cases}
			u_{tt}(x,t) = c(x)^2\Delta u(x,t),
			& (x,t)\in\mathbb{R}^d\times(0,T),\\
			u(x,0) = f(x),
			& x\in\mathbb{R}^d,\\
			u_t(x,0) = 0,
			& x\in\mathbb{R}^d.
		\end{cases}
		\label{eq:wave-equation}
	\end{equation}
	where $c(x)>0$ is the wave speed and $f$ is the initial source. We assume that $f$ is supported in a bounded region $\Omega\subset\mathbb{R}^d$ and that $c$ is known outside $\Omega$. For a fixed observation time $T>0$, the full boundary measurement is
	\begin{equation*}
		\Lambda(f,c):=u|_{\partial\Omega \times (0, T)}.
	\end{equation*}
	More generally, if $\Sigma\subset\partial\Omega$ is some subset of the boundary, one may consider instead the partial measurement
	\begin{equation*}
		\Lambda_{\Sigma}(f,c):=u|_{\Sigma \times (0, T)}.
	\end{equation*}
	Despite substantial interest in applications such as medical imaging, the simultaneous recovery of a general initial source $f$ and a general spatially varying wave speed $c$ from a single passive measurement remains open in the multidimensional setting. Indeed, the linearized problem was shown by Stefanov and Uhlmann to be unstable at every scale of Sobolev spaces \cite{StefanovUhlmann2013Instability}. The stability theory available when either the source or speed is known cannot be expected to extend directly to the problem of simultaneous recovery. 
	
	There are nevertheless several compelling partial results. Liu and Uhlmann establish simultaneous recovery from a single passive boundary measurement under an admissibility condition and additional structural restrictions, including the case of an unknown constant sound speed and a source independent of one spatial coordinate \cite{LiuUhlmann2015}. Acosta proves simultaneous recovery under a relative-uncertainty condition controlling the admissible discrepancy in the sound speed by that of the initial pressure \cite{Acosta2019}. Knox and Moradifam derive integral identities separating information about the sound speed from the unknown source and obtain uniqueness under additional structural hypotheses \cite{KnoxMoradifam2020}. Kian and Uhlmann prove simultaneous determination of the sound speed and initial source under a monotonicity condition on the sound speed \cite{KianUhlmann2025}. Feizmohammadi employs inverse spectral methods to achieve simultaneous recovery in dimension one \cite{Feizmohammadi2025}. More recently, Kian and Liu establish uniqueness for broad classes of piecewise-constant sound speeds in three dimensions, and Hölder stability in a more restricted setting \cite{KianLiu2025}. Chen et al. develop several simultaneous recovery results for the wave and related hyperbolic problems under structural admissibility assumptions \cite{ChenJiangLiuLoTao2025}. Finally, Moradifam proves global uniqueness and Lipschitz stability under a constitutive relation linking the initial source to the sound speed, together with certain geometric assumptions \cite{Moradifam2026}.
	
	In the present work, we study a discrete analogue of this simultaneous recovery problem, motivated primarily by spatial discretizations of the wave equation. Writing the wave equation in the equivalent mass form
	\begin{equation*}
		m(x)u_{tt}(x,t)-\Delta u(x,t)=0, \qquad m(x)=c(x)^{-2},
	\end{equation*}
	let $\Gamma_h=(V_h,E_h)$ be a mesh graph with nodes $x_1^h,\ldots,x_{N_h}^h$. Let $U_h(t)\in\mathbb{R}^{N_h}$ denote the vector of discrete nodal unknowns, with
	\begin{equation*}
		\bigl(U_h(t)\bigr)_i \approx u(x_i^h,t).
	\end{equation*}
	A finite-difference or mass-lumped finite-element discretization then leads to a system of the form
	\begin{equation*}
		M_hU_{h,tt}(t)+L_hU_h(t) = 0,
	\end{equation*}
	where $M_h$ is a diagonal and $L_h$ is the discrete spatial operator. This motivates the following graph-theoretic model. Let $\Gamma=(V,E)$ be a graph with $N$ vertices and graph Laplacian $L$. Consider
	\begin{equation}
		\label{eq:intro-mass-form}
		\begin{cases}
			MX_{tt}(t)=-LX(t), & t \geq 0,\\
			X(0)=f, \\
			X_t(0)=0.
		\end{cases}
	\end{equation}
	where $f\in\mathbb{R}^N$ is the initial signal and
	\begin{equation*}
		M=\operatorname{diag}(m_1,\ldots,m_N)
	\end{equation*}
	contains the vertex masses.
	
	Within this framework, we have exhibited sufficient conditions for the recovery of the initial signal or the vertex masses from a single passive boundary measurement \cite{MoradifamWaite2026a}. In that work, the central mechanism allowing for recovery of a single parameter was the zero forcing condition. The present work asks what additional geometric redundancies might be required when both the initial signal and the vertex masses are unknown.
	
	The first strengthening of the zero forcing condition is the new coloring game we call $r$-witness forcing. In the minimal $2$-witness case, we show that if information is able to propagate through the graph in a redundant manner, so that each newly recovered vertex is forced simultaneously by two previously recovered vertices, then the two resulting equations can be used to separate the unknown mass discrepancies from the value being propagated. This gives simultaneous recovery of the vertex masses and initial signal from a single passive measurement under generic hypotheses.
	
	We then show that the information needed to disambiguate the masses need not be localized so strictly. In the frontier propagation framework, the redundant equations may instead be distributed across a collection of vertices forming a frontier between successive contour sets. The relevant information is encoded by the coupling matrix between the frontier and the next contour: full column rank permits propagation to the next layer, while a nontrivial left kernel supplies relations that eliminate the unknown propagated values and isolate the mass discrepancies. This gives a substantially more general simultaneous recovery mechanism, of which sequential $2$-witness forcing appears as a minimal special case. In particular, triangular funnel graphs admit redundant frontier propagation even though the observation set does not satisfy the $2$-witness zero forcing condition. The required nondegeneracy conditions are generic when $(L,M)$ has simple generalized spectrum, although spectral simplicity is not needed for the uniqueness result itself.
	
	For the frontier propagation mechanism, we establish injectivity of the Fréchet derivative and thereby Lipschitz stability on compact admissible parameter classes. Finally, we show that the uniqueness and stability results extend to positively weighted graphs under the corresponding weighted geometric hypotheses.
	
	The remainder of the paper is organized as follows. Section \ref{sec:background} establishes the requisite notation and definitions. Section \ref{sec:2-witness} contains the proof of the $2$-witness result together with examples illustrating its scope and limitations. Section \ref{sec:local-redundancy} proves uniqueness and Lipschitz stability for the more general frontier propagation mechanism, as well as a weakening of the nondegeneracy assumptions when the masses are constant along the contours of the graph. We conclude in Section \ref{sec:weighted} by extending the uniqueness and stability results to positively weighted graphs.
	
\section{Preliminaries}
	
	\label{sec:background}
	
	In this section, we introduce the graph-theoretic notation and the discrete wave models used throughout the paper. We also define graph-theoretic notions that will play a role in the various conditions for simultaneous recovery.
	
\subsection{Graph notation}
	
	We begin with the basic notation for finite graphs. Throughout the paper, all graphs are assumed to be finite, simple, undirected, and connected, with at least two vertices.
	
	\begin{defn}
		An ordered pair $\Gamma=(V,E)$ is a simple undirected graph if
		\begin{equation*}
			V=\{1,\ldots,N\}\subseteq\mathbb{N}
		\end{equation*}
		and
		\begin{equation*}
			E\subseteq\bigl\{\{i,j\}\mid i,j\in V,\ i<j\bigr\}.
		\end{equation*}
		The set $V$ is called the vertex set and $E$ the edge set.
	\end{defn}
	
	\begin{defn}
		The adjacency matrix $A_\Gamma\in\mathbb{R}^{N\times N}$ is defined by
		\begin{equation*}
			(A_\Gamma)_{ij}:=
			\begin{cases}
				1, & \text{if }\{i,j\}\in E,\\
				0, & \text{otherwise}.
			\end{cases}
		\end{equation*}
		The degree matrix is
		\begin{equation*}
			D_\Gamma=\operatorname{diag}(d_1,\ldots,d_N),
			\qquad
			d_i=\sum_{j=1}^N(A_\Gamma)_{ij}.
		\end{equation*}
	\end{defn}
	
	\begin{defn}
		The graph Laplacian is the matrix
		\begin{equation*}
			L=D_\Gamma-A_\Gamma.
		\end{equation*}
	\end{defn}
	
	For a simple graph, the off-diagonal entries of $L$ determine adjacency and the diagonal entries determine the vertex degrees. Thus $L$ determines the graph and will be regarded as encoding its geometry.
	
	A scalar graph signal on $\Gamma$ is a vector $X\in\mathbb{R}^N$, where $X_i$ denotes the value associated with vertex $i$. For $u\in V$, let $E_{uu}\in\mathbb{R}^{N\times N}$ denote the matrix with a $1$ in entry $(u,u)$ and zeros elsewhere.
	
	\begin{defn}
		Let $B\subseteq V$. Initially color the vertices in $B$ black and the remaining vertices white. If a black vertex has exactly one white neighbor, that neighbor may be changed to black. The set $B$ is called a zero forcing set if repeated application of this rule eventually colors every vertex black. A zero forcing sequence is then a sequence $(S_k)_{k=0}^K$ with $S_0=B$ and $S_K=V$ such that, at each stage, a vertex $u\in S_k$ forces a vertex $w\in S_{k+1}\setminus S_k$. In this case one may say that $S_k$ forces $S_{k + 1}$ and $u$ forces $w$. For a fixed zero forcing sequence $(S_k)_{k = 0}^K$, the set of forcing vertices is the collection 
		\begin{equation*}
			P := \{u \in V \; | \; \text{there is a } k \text{ such that } u \in S_k \text{ forces } w \in S_{k + 1} \setminus S_k\}.
		\end{equation*} 
	\end{defn}
	
	\begin{defn}
		Let $B \subseteq V$ be a zero forcing set of a graph $\Gamma$. Fix a zero forcing sequence $(S_k)_{k = 0}^K$. A forcing chain is a maximal sequence of distinct vertices 
		\begin{equation*}
			v_0, v_1, \dots, v_\ell
		\end{equation*}
		such that $v_i$ forces $v_{i + 1}$ for every $i \in \{0, \dots, \ell -1\}$. Vertices of $B$ that do not participate in a force are regarded as chains of length zero. The forcing chains are pairwise disjoint and partition $V$.
	\end{defn}
	
	The zero forcing condition and the auxiliary concepts here are standard and used extensively in the literature on inverse problems on graphs. A comprehensive account of this literature is given in \cite{HogbenLinShader2022}. The next definition is a new generalization of the zero forcing condition via the introduction of more stringent requirements on the forcing moves. 
	
	\begin{defn}
		Let $r \in \mathbb{N}$ and $B \subseteq V$. Initially color the vertices in $B$ black and the remaining vertices white. If $r$ distinct black vertices $u_1, \dots, u_r$ each have exactly one white neighbor and this neighbor is the same vertex $w$, then $w$ may be changed to black. The set $B$ is called an $r$-witness zero forcing set if repeated application of this rule eventually colors every vertex black. An $r$-witness zero forcing sequence is then a sequence $(S_k)_{k = 0}^K$ with $S_0 = B$ and $S_K = V$ such that, at each stage, distinct vertices $u_1, \dots, u_r \in S_k$ force a vertex $w \in S_{k + 1} \setminus S_k$, meaning that $w$ is the unique neighbor of each $u_i$ outside $S_k$. In this case one may say that $\{u_1, \dots, u_r\}$ witnesses the forcing of $w$.
	\end{defn}	
	
	\begin{exmp}
		Consider a finite rooted $r$-ary tree, so that every non-leaf vertex has exactly $r$ children, and take the set of leaves as the boundary set $B$. The leaves $r$-witness force their parents simultaneously. Once the entire next level has been colored, the $r$ children of each vertex in the level above have that vertex as their unique uncolored neighbor, and hence jointly force it. Continuing inductively, the forcing propagates level by level toward the root. Thus the set of leaves is an $r$-witness zero forcing set. This is illustrated schematically by Figure \ref{fig:r-ary-tree}.
		
		\begin{figure}[H]
\centering
    \begin{tikzpicture}[scale = 1.0]
        \coordinate (root) at (0,0);
    
        \coordinate (u1) at (-4,-1.8);
        \coordinate (u2) at (-1.5,-1.8);
        \coordinate (ur) at (4,-1.8);
    
        \coordinate (v11) at (-5,-3.6);
        \coordinate (v12) at (-4.4,-3.6);
        \coordinate (v1r) at (-3,-3.6);
    
        \coordinate (v21) at (-2.5,-3.6);
        \coordinate (v22) at (-1.9,-3.6);
        \coordinate (v2r) at (-0.5,-3.6);
    
        \coordinate (vr1) at (3,-3.6);
        \coordinate (vr2) at (3.6,-3.6);
        \coordinate (vrr) at (5,-3.6);
    
        \draw (root) -- (u1);
        \draw (root) -- (u2);
        \draw (root) -- (ur);
    
        \draw (u1) -- (v11);
        \draw (u1) -- (v12);
        \draw (u1) -- (v1r);
    
        \draw (u2) -- (v21);
        \draw (u2) -- (v22);
        \draw (u2) -- (v2r);
    
        \draw (ur) -- (vr1);
        \draw (ur) -- (vr2);
        \draw (ur) -- (vrr);
    
        \node[forced] at (root) {$2$};
    
        \node[forced] at (u1) {$1$};
        \node[forced] at (u2) {$1$};
        \node[forced] at (ur) {$1$};
    
        \node at (1.25,-1.8) {$\cdots$};
    
        \node[observed] at (v11) {};
        \node[observed] at (v12) {};
        \node at (-3.7,-3.6) {$\cdots$};
        \node[observed] at (v1r) {};
    
        \node[observed] at (v21) {};
        \node[observed] at (v22) {};
        \node at (-1.2,-3.6) {$\cdots$};
        \node[observed] at (v2r) {};
    
        \node at (1.25,-3.6) {$\cdots$};
    
        \node[observed] at (vr1) {};
        \node[observed] at (vr2) {};
        \node at (4.3,-3.6) {$\cdots$};
        \node[observed] at (vrr) {};
    
        \node[below = 4pt] at (v11) {$v_{11}$};
        \node[below = 4pt] at (v1r) {$v_{1r}$};
        \node[below = 4pt] at (v21) {$v_{21}$};
        \node[below = 4pt] at (v2r) {$v_{2r}$};
        \node[below = 4pt] at (vr1) {$v_{r1}$};
        \node[below = 4pt] at (vrr) {$v_{rr}$};
    \end{tikzpicture}
    \caption{A rooted $r$-ary tree of depth $2$.}
    \label{fig:r-ary-tree}
\end{figure}
	\end{exmp}
	
\subsection{Wave notation}
	
	Throughout this paper, we denote the collection of $N \times N$ diagonal matrices by $\mathcal{D}^N$ and those with strictly positive entries by $\mathcal{D}_{++}^N$.
	
	\begin{defn}
		Let $\Gamma$ be a graph with graph Laplacian $L$, and let
		\begin{equation*}
			X:[0,\infty)\to\mathbb{R}^N
		\end{equation*}
		be a time-dependent graph signal. The graph wave equation is
		\begin{equation}
			\begin{cases}
				X_{tt}(t) = -C^2LX(t),
				& t\geq 0,\\
				X(0) = f, \\
				X_t(0) = 0,
			\end{cases}
			\label{eq:graph-wave-equation}
		\end{equation}
		where $C \in \mathcal{D}_{++}^N$ is the matrix of vertex wave speeds and $f\in\mathbb{R}^N$ is the initial signal.
	\end{defn}
	
	Equation \eqref{eq:graph-wave-equation} is a finite-dimensional system of ordinary differential equations in time, with the graph Laplacian describing the spatial coupling between vertices. Since it is a finite-dimensional system with constant coefficients, standard ODE theory guarantees a unique solution for every initial signal $f\in\mathbb{R}^N$ and every $C \in \mathcal{D}_{++}^N$. See, for example, \cite{ODE}.
	
	We work with a transformed version of \eqref{eq:graph-wave-equation}. Reparametrizing the vertex speeds by
	\begin{equation*}
		M=\diag(m_1,\ldots,m_N),
		\qquad
		m_i=c_i^{-2},
	\end{equation*}
	one may write
	\begin{equation}
		\label{eq:mass-form}
		\begin{cases}
			MX_{tt}(t)=-LX(t), & t \geq 0,\\
			X(0)=f, \\
			X_t(0)=0.
		\end{cases}
	\end{equation}
	We refer to this as the \textit{mass-form graph wave equation}.

\section{Recovery with 2-witness forcing}
\label{sec:2-witness}
	
	In this section, we exhibit the sufficiency of $2$-witness zero forcing for the simultaneous recovery of sources and speeds on the graph wave equation. Simultaneous recovery requires more information than the recovery of speeds or sources alone. As we shall demonstrate in the next section, this $2$-witness result can be viewed as the minimal or naïve case of a certain form of local information redundancy. If there are two witnesses to each forcing move, then at each step there is precisely enough redundant information to recover the masses at the forcing vertices and thereby propagate equality of the solutions to the newly forced vertex.
	
	Begin by introducing the set 
	\begin{equation*}
		\mathcal{G} := \left\{ f \in \mathbb{R}^N \mid X_{tt,u}^f(t) \text{ and } X_{tt,v}^f(t) \text{ are linearly independent on } [0, T] \text{ for all } u \neq v \in V \right\}
	\end{equation*}
	We will show in Section \ref{sec:local-redundancy} that, for $N \geq 3$ and simple generalized spectrum, this set is open, dense, and of full Lebesgue measure in $\mathbb{R}^N$.
	
\subsection{Uniqueness}
	
	\begin{thm}[Uniqueness under $2$-witness zero forcing and genericity hypotheses]
		\label{thm:2-witness-zero-forcing}
		
		Suppose that $B \subseteq V$ is a $2$-witness zero forcing set of $\Gamma$, fix a $2$-witness zero forcing sequence, and let $P \subseteq V$ be the corresponding set of forcing vertices. Fix $T > 0$, $f,\widetilde{f} \in \mathbb{R}^N$, and $M, \widetilde{M} \in \mathcal{D}_{++}^N$. Let $X$ and $\widetilde{X}$ denote the corresponding solutions of the mass-form graph wave equation. Suppose that $f \in \mathcal{G}$. If
		\begin{equation*}
			R_BX(t) = R_B\widetilde{X}(t)
			\qquad
			\text{for every } t \in [0,T],
		\end{equation*}
		then
		\begin{equation*}
			M = \widetilde{M}
			\qquad
			\text{and}
			\qquad
			f = \widetilde{f}.
		\end{equation*}
	\end{thm}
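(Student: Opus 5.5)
We argue by induction along the fixed $2$-witness zero forcing sequence $(S_k)_{k=0}^K$. The inductive hypothesis is that $X_v(t)=\widetilde X_v(t)$ for all $t\in[0,T]$ and all $v\in S_k$. The base case $k=0$ is the hypothesis $R_BX=R_B\widetilde X$. At the end, $X=\widetilde X$ on $V\times[0,T]$, so $f=X(0)=\widetilde X(0)=\widetilde f$. The masses then follow from $MX_{tt}=-LX=-L\widetilde X=\widetilde M X_{tt}$: since $f\in\mathcal G$, each $X_{tt,v}$ is not identically zero on $[0,T]$ (linear independence of two functions forces both to be nonzero), so $m_v=\widetilde m_v$ for every $v$. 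One point to keep in mind is that the masses at forcing vertices get determined along the way, which is exactly what makes the induction work.

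For the inductive step, suppose $u_1,u_2\in S_k$ witness the forcing of $w$, so $w$ is the unique neighbor of each $u_i$ outside $S_k$. Write $\delta_i=m_{u_i}-\widetilde m_{u_i}$ and $g=X_w-\widetilde X_w$. Subtract the $u_i$-th rows of the two equations $MX_{tt}=-LX$ and $\widetilde M\widetilde X_{tt}=-L\widetilde X$. Using $X=\widetilde X$ on $S_k$ (hence also $X_{tt}=\widetilde X_{tt}$ there), the only surviving neighbor term is the one at $w$, with coefficient $L_{u_iw}=-1$. This gives
\begin{equation*}
\delta_i\,X_{tt,u_i}(t)=g(t),\qquad i=1,2,\ t\in[0,T].
\end{equation*}
Hence $\delta_1X_{tt,u_1}-\delta_2X_{tt,u_2}\equiv 0$ on $[0,T]$. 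Because $u_1\neq u_2$ and $f\in\mathcal G$, the functions $X_{tt,u_1}$ and $X_{tt,u_2}$ are linearly independent, so $\delta_1=\delta_2=0$. Then $g\equiv0$, so $X_w=\widetilde X_w$ on $[0,T]$, which is the inductive hypothesis for $S_{k+1}$. If several vertices are forced at the same stage, the same argument applies to each one separately, since each computation uses only $S_k$.

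The only subtle point is that the genericity condition is imposed on $X$ alone, not on $\widetilde X$. We must therefore make sure that each identity is written in terms of $X_{tt}$ rather than $\widetilde X_{tt}$. This is fine: on $S_k$ the two coincide, so $\widetilde m_{u_i}\widetilde X_{tt,u_i}=\widetilde m_{u_i}X_{tt,u_i}$, which is what produces the factor $\delta_i X_{tt,u_i}$. Otherwise the argument is routine bookkeeping of the Laplacian row: the diagonal term $d_{u_i}X_{u_i}$ and all neighbors inside $S_k$ cancel.
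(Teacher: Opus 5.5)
Your proof is correct and follows the paper's argument: you induct along the $2$-witness sequence, read off the identity $\delta_i X_{tt,u_i} = X_w-\widetilde X_w$ from the rows of the difference equation at the two witnesses, and use the linear independence supplied by $f\in\mathcal{G}$ to kill both mass discrepancies and then the propagated difference. The only cosmetic difference is at the end. You recover every mass from $(M-\widetilde M)X_{tt}=0$ together with nonvanishing of each $X_{tt,v}$, whereas the paper records the witness masses during the induction and treats only $V\setminus P$ at the end.
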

	
	\begin{proof}
		Suppose that $X_i(t) = \widetilde{X}_i(t)$ for every $i \in B$ and every $t \in [0, T]$. Set 
		\begin{equation*}
			W := X - \widetilde{X}. 
		\end{equation*}
		The equality of the observed traces gives immediately that 
		\begin{equation*}
			W_i \equiv 0
		\end{equation*}
		for each $i \in B$. Subtracting the two wave equations gives 
		\begin{equation}
			\label{eq:forced-wave}
			\widetilde{M}W_{tt} + LW = \left(\widetilde{M} - M\right) X_{tt}.
		\end{equation}
		
		Let $(S_k)_{k = 0}^K$ be the fixed $2$-witness zero forcing sequence with $S_0 = B$ and $S_K = V$. Suppose inductively that $W_j \equiv 0$ for each $j \in S_k$, and let $\{u, v\} \subseteq S_k$ witness the forcing of $w \notin S_k$. By the induction hypothesis that 
		\begin{equation*}
			W_u \equiv 0 \qquad \text{and} \qquad W_v \equiv 0, 
		\end{equation*}
		so that taking derivatives one has
		\begin{equation*}
			W_{tt, u} \equiv 0 \qquad \text{and} \qquad W_{tt, v} \equiv 0.
		\end{equation*}
		Since $w$ is the unique neighbor of $u$ outside $S_k$, all the other neighbors of $u$ belong to $S_k$. One has therefore that 
		\begin{equation*}
			\left(LW\right)_u = \deg(u) W_u - \sum_{j \sim u} W_j = -W_w.
		\end{equation*}
		Similarly, one has by evaluating at $v$ that 
		\begin{equation*}
			\left(LW\right)_v = \deg(v) W_v - \sum_{j \sim v} W_j = -W_w.
		\end{equation*}
		Thus from \eqref{eq:forced-wave},
		\begin{equation*}
			(\widetilde{m}_u - m_u) X_{tt, u} = -W_w = (\widetilde{m}_v - m_v) X_{tt, v}.
		\end{equation*}
		But recall that $f \in \mathcal{G}$. Thus $X_{tt, u}$ and $X_{tt, v}$ are linearly independent. Thus
		\begin{equation*}
			m_u = \widetilde{m}_u \qquad \text{and} \qquad m_v = \widetilde{m}_v,
		\end{equation*}
		and moreover $W_w \equiv 0$. By taking $t = 0$ one has $f_w = \widetilde{f}_w$. Thus equality of the traces and therefore equality of the sources propagate from $S_k$ to $S_{k + 1}$ and therefore by induction to all of $V$. Equality of the coefficients propagates similarly to all forcing witness vertices $i \in P$. 
		
		It remains to show that the masses are equal for vertices in $V \setminus P$. Therefore fix $j \in V \setminus P$. Since $X = \widetilde{X}$, subtracting the two equations at $j$ gives on the one hand that 
		\begin{equation*}
			(\widetilde{m}_j - m_j) X_{tt, j} = 0.
		\end{equation*}
		On the other hand,
		\begin{equation*}
			X_{tt,j} = -\frac{1}{m_j}(LX)_j.
		\end{equation*}
		Since $f \in \mathcal{G}$, $X_{tt,j} \not\equiv 0$ on $[0,T]$, and hence $m_j = \widetilde{m}_j$. Thus indeed 
		\begin{equation*}
			M = \widetilde{M}
			\qquad \text{and} \qquad
			f = \widetilde{f}
		\end{equation*}
		as claimed.
	\end{proof}
	
	\begin{exmp}
		Consider a $4 \times 4$ grid with observations made on three edges as in Figure \ref{fig:4x4-grid}.
		
		\begin{figure}[H]
\centering
    \begin{tikzpicture}[scale = 1.05]
        \foreach \i in {0,1,2,3} {
            \foreach \j in {0,1,2,3} {
                \coordinate (v\i\j) at (\j,-\i);
            }
        }
        
        \foreach \i in {0,1,2,3} {
            \foreach \j in {0,1,2} {
                \pgfmathtruncatemacro{\jp}{\j + 1}
                \draw (v\i\j) -- (v\i\jp);
            }
        }
        
        \foreach \i in {0,1,2} {
            \pgfmathtruncatemacro{\ip}{\i + 1}
            \foreach \j in {0,1,2,3} {
                \draw (v\i\j) -- (v\ip\j);
            }
        }
        
        \foreach \j in {0,1,2,3} {
            \node[observed] at (v0\j) {};
            \node[observed] at (v3\j) {};
        }
        
        \node[observed] at (v10) {};
        \node[observed] at (v20) {};
        
        \node[forced] at (v11) {$1$};
        \node[forced] at (v21) {$1$};
        \node[forced] at (v12) {$2$};
        \node[forced] at (v22) {$2$};
        \node[forced] at (v13) {$3$};
        \node[forced] at (v23) {$3$};
    \end{tikzpicture}
    \caption{A $2$-witness zero forcing set on the $4 \times 4$ grid.}
    \label{fig:4x4-grid}
\end{figure}
		
		The two vertices labeled $1$ may be forced simultaneously. Each is witnessed by the observed vertex directly above or below it and by the observed vertex immediately to its left. Once these vertices have been forced, they provide the second witnesses for the two vertices labeled $2$, while the corresponding vertices on the upper and lower observed edges provide the first witnesses. The same argument then forces the two vertices labeled $3$. Thus two parallel $2$-witness forcing chains propagate simultaneously across the two interior rows of the grid.
	\end{exmp}
	
	\begin{exmp}
		The previous example can be extruded along the third coordinate to produce a $2$-witness zero forcing of the $4 \times 4 \times n$ cubic grid. In this case, observations are made on two parallel faces and one of the remaining perpendicular faces. If $k$ denotes the third coordinate when the graph is realized through the standard embedding in $\mathbb{Z}^3 \subseteq \mathbb{R}^3$, then each cross section at fixed $k$ is identical to the $4 \times 4$ grid of the preceding example.
		
		The forcing may be performed simultaneously across all of these cross sections. At the first stage, all vertices corresponding to the vertices labeled $1$ in the preceding diagram are forced. Since the entire corresponding lines in the third coordinate are forced at the same stage, the additional neighbors arising from edges in the third coordinate are already colored when the next stage begins. Consequently, the vertices labeled $1$ may serve as witnesses for the vertices labeled $2$ in every cross section simultaneously. The same argument then propagates to the vertices labeled $3$. Thus the two-dimensional $2$-witness forcing pattern extends unchanged through the third coordinate.
	\end{exmp}
	
	\begin{rmk}
		The preceding construction does not extend directly to an $n \times n$ grid when $n \geq 5$. Indeed, suppose that the upper, lower, and left edges are observed. At the first stage, the vertices immediately to the right of the left edge and adjacent to the upper and lower edges may be forced. 
		
		When $n \geq 5$, however, there is at least one additional interior row separating the two forcing fronts. After the first pair of forces, each newly forced vertex has both the vertex immediately to its right and a vertex in the adjacent interior row as uncolored neighbors. It therefore cannot serve as a witness for the next vertex in its row. The vertices on the observed perpendicular edge still have unique uncolored neighbors, but there is no second witness for any of these vertices. Consequently, the $2$-witness forcing process stalls after the first stage.
	\end{rmk}
	
\section{Recovery with redundant frontier propagation}
\label{sec:local-redundancy}
	
	The preceding theorem uses two independent forcing relations at each step to separate the unknown mass discrepancy from the value being propagated to a newly recovered vertex. The next result isolates this mechanism from the combinatorics of zero forcing by allowing the required redundancy to be distributed across a frontier separating successive contour sets of a graph function. Sequential $r$-witness zero forcing will then appear as the special case of this more general frontier-propagation framework, in which case $2$-witness forcing provides the minimal version.
	
	\begin{defn} 
		Let $\Gamma = (V, E)$ be a graph. Let $\varphi: V \to \{0, 1, \dots, K\}$ be a graph function. Define the contour set
		\begin{equation*}
			C_k := \varphi^{-1}(k)
		\end{equation*}
		and set
		\begin{equation*}
			S_k = \bigcup_{i = 0}^k C_i.
		\end{equation*}
		
		Let $R_A: \mathbb{R}^N \to \mathbb{R}^{|A|}$ denote the restriction from $V$ onto $A$. For each $k$, define the eligible frontier
		\begin{equation*}
			\widehat{F}_k := \{u \in S_k \mid R_{\{u\}}LR_{C_{k + 1}}^T \neq 0 \text{ and } R_{\{u\}}LR_{V \setminus S_{k + 1}}^T  = 0\}.
		\end{equation*}
		A frontier set at level $k$ is a subset
		\begin{equation*}
			F_k \subseteq \widehat{F}_k.
		\end{equation*}
        
Intuitively, the eligible frontier $\widehat{F}_k$ consists of vertices in the already recovered region $S_k$ that are connected to the next contour $C_{k+1}$, but have no neighbors in any subsequent contour. Consequently, the wave equation at each eligible frontier vertex involves only previously recovered signal values and unknown values on $C_{k+1}$, allowing information to propagate one contour at a time. Associated with this choice, define the coupling matrix
		\begin{equation*}
			A_k := -R_{F_k}LR_{C_{k+1}}^T.
		\end{equation*}
		This is the adjacency matrix between the frontier set at level $k$ and the contour at level $k + 1$, and will be subject to geometric assumptions in the following theorem.
	\end{defn}

\subsection{Genericity of assumptions}
	
	We show that the analytic nondegeneracy conditions used in the frontier-propagation theorem are generic. First, simple generalized spectrum holds for an open, dense, full-measure set of positive diagonal mass matrices. For each such mass matrix, linear independence of the frontier accelerations holds for an open, dense, full-measure set of initial signals. The remaining nonharmonicity condition is also generic.
	
	\begin{lemma}
		The simple-spectrum condition is generic in the mass matrix and is sufficient for the genericity of the acceleration-independence conditions below. Indeed, identify $\mathcal{D}_{++}^N$ with $(0,\infty)^N$ and write
		\begin{equation*}
			\det(L - \lambda M) = \lambda p_M(\lambda),
		\end{equation*}
		where the factor $\lambda$ is simple since $\Gamma$ is connected. The discriminant
		\begin{equation*}
			\Delta(M) := \Disc(p_M)
		\end{equation*}
		is a polynomial in the diagonal entries of $M$. The discriminant detects repeated roots: it vanishes precisely when $p_M$ has a repeated root. Thus $(L,M)$ has simple generalized spectrum precisely when $\Delta(M) \neq 0$. To prove generic simplicity, it therefore suffices to show that $\Delta$ is not identically zero.
		
		To establish this, suppose that a positive generalized eigenvalue $\lambda$ has multiplicity $r \geq 2$, choose an $M$-orthonormal basis $\{\varphi_1,\dots,\varphi_r\}$ for its eigenspace and a vertex $i$ such that
		\begin{equation*}
			v_i := (\varphi_{1,i},\dots,\varphi_{r,i})^T \neq 0.
		\end{equation*}
		Under the diagonal perturbation $M_\varepsilon = M + \varepsilon E_{ii}$, standard first-order perturbation theory shows that the derivatives of the eigenvalue branches at $\varepsilon = 0$ are the eigenvalues of
		\begin{equation*}
			-\lambda v_i v_i^T.
		\end{equation*}
		Since $v_i \neq 0$, this rank-one matrix has one strictly negative eigenvalue and $r-1$ zero eigenvalues. Thus one branch separates from the repeated cluster to first order, reducing its multiplicity. Repeating this process reduces the multiplicities of all repeated clusters. At each step, the perturbation can be chosen sufficiently small to preserve the positive gaps between distinct eigenvalue clusters, so previously separated eigenvalues remain distinct. After finitely many steps, we obtain some $M_* \in \mathcal{D}_{++}^N$ with simple generalized spectrum. Hence $\Delta \not\equiv 0$, and its zero set has empty interior and Lebesgue measure zero. Therefore
		\begin{equation*}
			\{M \in \mathcal{D}_{++}^N \mid (L,M) \text{ has simple generalized spectrum}\}
		\end{equation*}
		is open, dense, and of full Lebesgue measure.
	\end{lemma}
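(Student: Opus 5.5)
The plan is to reduce generic simplicity of the generalized spectrum to the nonvanishing of a single polynomial, and then to exhibit one mass matrix where that polynomial is nonzero by a perturbation argument.

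\textbf{Step 1: the characteristic polynomial and the discriminant.} I would write $\det(L-\lambda M)$ as a polynomial in $\lambda$ whose coefficients are polynomials in $m_1,\dots,m_N$. Since $L\mathbf{1}=0$ and $\Gamma$ is connected, $\lambda=0$ is a root of multiplicity one. To see this, note that the pencil is symmetric definite. Equivalently, the matrix $M^{-1/2}LM^{-1/2}$ is positive semidefinite with kernel spanned by $M^{1/2}\mathbf{1}$. Hence $\det(L-\lambda M)=\lambda p_M(\lambda)$ with $p_M(0)\neq0$.

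The leading coefficient of $p_M$ is $\pm\det M\neq0$. So the degree of $p_M$ is constant on $\mathcal{D}_{++}^N$, and $\Disc(p_M)$ is a polynomial in the diagonal entries $m_i$. All roots of $p_M$ are real and positive, so simple generalized spectrum is exactly the condition $\Delta(M)\neq0$. Once $\Delta\not\equiv0$ is known, two standard facts finish the proof:
\begin{itemize}
\item the zero set of a nonzero polynomial is closed, nowhere dense, and Lebesgue-null;
\item $(0,\infty)^N$ is open, so the complement of this zero set within it is open, dense, and of full measure.
\end{itemize}

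\textbf{Step 2: exhibit one $M_*$ with $\Delta(M_*)\neq0$.} I would start from any $M$, for instance the identity, and run the perturbation argument from the statement.

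Suppose $\lambda>0$ has multiplicity $r\ge2$, with $M$-orthonormal eigenvectors $\varphi_1,\dots,\varphi_r$. Set $M_\varepsilon=M+\varepsilon E_{ii}$. Degenerate first-order perturbation theory for the Hermitian analytic family $M_\varepsilon^{-1/2}LM_\varepsilon^{-1/2}$ (Rellich/Kato) gives the derivatives of the branches as the eigenvalues of $\bigl(-\lambda\,\varphi_a^T E_{ii}\varphi_b\bigr)_{a,b}=-\lambda v_iv_i^T$. Such a vertex $i$ with $v_i\ne0$ exists because the eigenvectors are nonzero. This matrix has eigenvalues $-\lambda|v_i|^2<0$ and $0$ with multiplicity $r-1$, so for small $\varepsilon>0$ one branch splits off. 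The multiplicity of the cluster at $\lambda$ then drops to at most $r-1$.

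By continuity of eigenvalues, a small enough $\varepsilon$ keeps all previously distinct eigenvalues distinct and keeps $M_\varepsilon$ positive. Induction on the total excess multiplicity $\sum(\text{mult}-1)$ then yields $M_*$ with simple spectrum.

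\textbf{Main obstacle.} The key point is that a perturbation may split one cluster while not fully resolving the remaining $(r-1)$-fold cluster. The induction measure handles this, because it still strictly decreases at each step.

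A cleaner alternative I would keep in reserve is algebraic. Since $\Delta$ is a polynomial, it suffices to find any point where it is nonzero, possibly a degenerate one in the closure of the domain. For example, take $m_1=\dots=m_{N-1}$ at distinct large scales $m_i=t^{i}$ with $t\to\infty$. Then the eigenvalues asymptotically behave like $d_i/m_i$, which are distinct for large $t$. This gives a direct witness point, avoiding the iterative argument.
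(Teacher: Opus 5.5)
Your proposal is correct and follows the paper's argument essentially step for step. It reduces the claim to $\Delta\not\equiv 0$ via the discriminant of $p_M$ (your remark that the leading coefficient $\pm\det M$ never vanishes is a useful detail), then applies the splitting perturbation $M+\varepsilon E_{ii}$ with first-order matrix $-\lambda v_iv_i^T$, and your induction on the total excess multiplicity makes precise the paper's ``repeat finitely many times.''

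The reserve algebraic witness is not needed, and its asymptotics are misstated. For $m_j=t^j$, one eigenvalue is identically $0$. The nonzero ones behave like $\delta_j/(\delta_{j-1}m_j)$ for $j=1,\dots,N-1$, where $\delta_j$ is the $j$-th leading principal minor of $L$ and $\delta_0=1$, rather than like $d_j/m_j$. Your conclusion still holds: these eigenvalues lie at distinct orders $t^{-j}$ and are therefore distinct for large $t$.
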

	
	\begin{lemma}
		For each fixed $M \in \mathcal{D}_{++}^N$, let $\mathcal{J}$ denote the set of initial signals for which the frontier accelerations are linearly independent at every stage. When $(L,M)$ has simple generalized spectrum, this condition is generic in the initial signal. Fix such an $M$, and let $\{\varphi_\ell\}_{\ell = 0}^{N - 1}$ be an $M$-orthonormal basis of generalized eigenvectors satisfying
		\begin{equation*}
			L\varphi_\ell = \lambda_\ell M\varphi_\ell,
			\qquad
			0 = \lambda_0 < \lambda_1 < \cdots < \lambda_{N - 1}.
		\end{equation*}
		For a fixed frontier $F_k = \{u_1,\dots,u_r\}$, one has $r \leq N - 1$ since $F_k \subsetneq V$. The key spatial observation is that the nonzero generalized eigenvectors retain enough information to span the values on any proper subset of vertices. Indeed, since the zero generalized eigenvector is proportional to $\mathbf{1}$, $M$-orthogonality of the eigenbasis gives
		\begin{equation*}
			\operatorname{span}\{\varphi_1,\dots,\varphi_{N - 1}\}
			=
			\{x \in \mathbb{R}^N \mid (M\mathbf{1}) \cdot x = 0\}.
		\end{equation*}
		The Euclidean orthogonal complement of this span is $\operatorname{span}\{M\mathbf{1}\}$. If the restrictions of $\varphi_1,\dots,\varphi_{N-1}$ to $F_k$ did not span $\mathbb{R}^r$, there would be a nonzero vector $z$ supported on $F_k$ and orthogonal to all these eigenvectors. It would follow that $z \in \operatorname{span}\{M\mathbf{1}\}$, which is impossible: $F_k$ is a proper subset of $V$, whereas every entry of $M\mathbf{1}$ is strictly positive.
		
		Thus one may choose distinct indices $p_1,\dots,p_r \in \{1,\dots,N - 1\}$ such that
		\begin{equation*}
			\left(\varphi_{p_i,u_j}\right)_{i,j = 1}^r
		\end{equation*}
		is invertible. Writing
		\begin{equation*}
			f = \sum_{\ell = 0}^{N - 1}\alpha_\ell\varphi_\ell,
		\end{equation*}
		one has
		\begin{equation*}
			X_{tt}^f(t) = -\sum_{\ell = 1}^{N - 1}\lambda_\ell\alpha_\ell\cos(\sqrt{\lambda_\ell}t)\varphi_\ell.
		\end{equation*}
		Intuitively, distinct generalized eigenvalues give distinct temporal oscillations and their eigenvectors describe how those oscillations appear at the frontier vertices. A generic initial signal excites enough of these modes to distinguish the frontier accelerations. More precisely, since the positive generalized eigenvalues are distinct, the functions $\cos(\sqrt{\lambda_\ell}t)$, $1 \leq \ell \leq N-1$, are linearly independent on every interval $[0,T]$ with $T>0$. If $\alpha_{p_i} \neq 0$ for each $i$, any linear relation among $\{X_{tt,u}^f \mid u \in F_k\}$ forces a vector into the kernel of the invertible matrix $\left(\varphi_{p_i,u_j}\right)_{i,j=1}^r$; hence all its coefficients vanish. Therefore failure of the desired condition for any frontier is contained in a finite union of hyperplanes
		\begin{equation*}
			\{f \in \mathbb{R}^N \mid \langle f,\varphi_{p_i}\rangle_M = 0\}.
		\end{equation*}
		Therefore $\mathcal{J}$ is dense and of full Lebesgue measure. Openness follows from continuity of the corresponding $L^2(0,T)$ Gram determinants. When $N \geq 3$, applying the same argument to each two-element subset of $V$ and taking a finite intersection also shows that $\mathcal{G}$ is open, dense, and of full Lebesgue measure.
	\end{lemma}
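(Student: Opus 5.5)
The approach is to reduce the statement to a finite-dimensional linear-algebra fact about the generalized eigenbasis, and then use linear independence of distinct cosines in time. Fix $M$ with simple generalized spectrum and an $M$-orthonormal eigenbasis $\{\varphi_\ell\}$ with $0=\lambda_0<\lambda_1<\cdots<\lambda_{N-1}$. The zero mode is proportional to $\mathbf{1}$, since $\Gamma$ is connected. Expanding $f=\sum_\ell \alpha_\ell \varphi_\ell$ with $\alpha_\ell=\langle f,\varphi_\ell\rangle_M$, the solution of the mass-form equation with $X_t(0)=0$ is $X(t)=\sum_\ell \alpha_\ell\cos(\sqrt{\lambda_\ell}t)\varphi_\ell$. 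Differentiating twice kills the $\ell=0$ term, so
\begin{equation*}
X_{tt,u}^f(t)=-\sum_{\ell\ge1}\lambda_\ell\alpha_\ell\cos(\sqrt{\lambda_\ell}t)\varphi_{\ell,u}.
\end{equation*}

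\textbf{Step 1 (spatial spanning).} For a fixed frontier $F_k=\{u_1,\dots,u_r\}\subsetneq V$, I show that the restrictions $R_{F_k}\varphi_\ell$, $\ell\ge1$, span $\mathbb{R}^r$. Suppose not. Then there is a nonzero $z$ supported on $F_k$ with $z\cdot\varphi_\ell=0$ for all $\ell\ge1$. By $M$-orthogonality, the span of $\varphi_1,\dots,\varphi_{N-1}$ is the hyperplane $(M\mathbf{1})^\perp$, so $z\in\operatorname{span}\{M\mathbf{1}\}$. This is a contradiction, because $M\mathbf{1}$ has all entries positive while $z$ vanishes off the proper subset $F_k$. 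Hence I can select indices $p_1,\dots,p_r\ge1$ for which the $r\times r$ matrix $\Phi=(\varphi_{p_i,u_j})$ is invertible.

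\textbf{Step 2 (temporal independence).} Suppose $\sum_j c_j X_{tt,u_j}^f\equiv0$ on $[0,T]$. Regroup this by modes as $\sum_{\ell\ge1}\lambda_\ell\alpha_\ell\bigl(\sum_j c_j\varphi_{\ell,u_j}\bigr)\cos(\sqrt{\lambda_\ell}t)\equiv0$. The frequencies $\sqrt{\lambda_\ell}$ are distinct and positive. The functions $\cos(\sqrt{\lambda_\ell}t)$ are analytic, so they are linearly independent on any interval; one can check this via a Wronskian or Vandermonde argument on even derivatives at $t=0$. Therefore each coefficient $\lambda_\ell\alpha_\ell\sum_j c_j\varphi_{\ell,u_j}$ vanishes. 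Now assume $\alpha_{p_i}\ne0$ for every $i$. Since $\lambda_{p_i}>0$, this gives $\Phi c=0$, hence $c=0$.

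\textbf{Step 3 (genericity and openness).} There are finitely many stages, hence finitely many frontiers. Consequently the failure set of the condition is contained in a finite union of proper hyperplanes $\{\langle f,\varphi_{p_i}\rangle_M=0\}$. That finite union is closed, nowhere dense, and Lebesgue-null, so $\mathcal{J}$ is dense and of full measure. For openness, note that linear independence on $[0,T]$ is equivalent to nonvanishing of the $L^2(0,T)$ Gram determinant of $\{X^f_{tt,u}\}_{u\in F_k}$. This determinant depends continuously on $f$, indeed polynomially, since $X^f$ is linear in $f$. A finite intersection of the resulting open sets is open. For $\mathcal{G}$, I run the same argument with $F=\{u,v\}$ for each pair $u\ne v$. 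Here $N\ge3$ guarantees that these pairs are proper subsets, and I intersect over finitely many pairs.

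The main obstacle I expect is Step 1, namely making the spanning claim rigorous. It must use both facts that the zero mode is exactly $\operatorname{span}\{\mathbf{1}\}$, which needs connectedness, and that positivity of $M$ forbids $M\mathbf{1}$ from being supported on a proper subset. I also need to confirm that the selected indices all avoid $\ell=0$, since the $\ell=0$ term carries no acceleration.
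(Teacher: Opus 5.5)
Your proposal is correct and follows the paper's argument essentially step for step. The shared steps are: the spanning claim via $\operatorname{span}\{\varphi_1,\dots,\varphi_{N-1}\}=(M\mathbf{1})^\perp$ and the strict positivity of $M\mathbf{1}$, the choice of an invertible minor $(\varphi_{p_i,u_j})$ with all $p_i\ge 1$, independence of the cosines with distinct frequencies, the finite union of hyperplanes $\{\langle f,\varphi_{p_i}\rangle_M=0\}$, and openness via the $L^2(0,T)$ Gram determinants. Your added details fill in points the paper leaves implicit: the Vandermonde argument on even derivatives at $t=0$ (distinct frequencies, not analyticity alone, are what give independence), the polynomial dependence of the Gram determinant on $f$, and the reason $N\ge 3$ is needed for the pairs defining $\mathcal{G}$.
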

	
	\begin{lemma}
		Finally, the set
		\begin{equation*}
			\mathcal{G}_{\text{top}} := \left\{f \in \mathbb{R}^N \mid (LX_M^f)_v \not\equiv 0 \text{ on } [0,T] \text{ for every } v \in V \setminus \bigcup_{k = 0}^{K - 1}F_k \right\}
		\end{equation*}
		is generic. The map
		\begin{equation*}
			\Psi_v : \mathbb{R}^N \to C([0,T])
		\end{equation*}
		by
		\begin{equation*}
			\Psi_v(f) := (LX_M^f)_v
		\end{equation*}
		is linear and continuous for each $v \in V \setminus \bigcup_{k = 0}^{K - 1}F_k$ because the solution of the graph wave equation depends linearly and continuously on its initial signal. Since the graph is connected and has at least two vertices, $\Psi_v(e_v)(0) = (Le_v)_v = \deg(v)>0$, where $e_v$ is the $v$th coordinate vector. Thus $\ker\Psi_v$ is a proper closed linear subspace of $\mathbb{R}^N$. Its complement is therefore open, dense, and of full Lebesgue measure. Taking the finite intersection over $v \in V \setminus \bigcup_{k = 0}^{K - 1}F_k$ proves the corresponding assertion for $\mathcal{G}_{\text{top}}$.
	\end{lemma}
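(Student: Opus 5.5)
The claim is that $\mathcal{G}_{\text{top}}$ is open, dense, and of full Lebesgue measure. The plan is to reduce this to a statement about finitely many proper linear subspaces of $\mathbb{R}^N$.

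\emph{Linearity and continuity.} Fix $M \in \mathcal{D}_{++}^N$ and write the solution as
\begin{equation*}
X_M^f(t)=\cos\bigl(t\sqrt{M^{-1}L}\bigr)f,
\end{equation*}
understood through the $M$-orthonormal generalized eigenbasis. Equivalently,
\begin{equation*}
X_M^f(t)=\sum_\ell \alpha_\ell\cos(\sqrt{\lambda_\ell}\,t)\varphi_\ell,
\qquad \alpha_\ell=\langle f,\varphi_\ell\rangle_M .
\end{equation*}
This formula needs no spectral simplicity. It shows that $f\mapsto X_M^f$ is linear from $\mathbb{R}^N$ into $C([0,T];\mathbb{R}^N)$. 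Because $\mathbb{R}^N$ is finite dimensional, this map is automatically continuous in the sup norm. Composing with $L$ and with evaluation at the $v$th coordinate, each $\Psi_v$ is a continuous linear map $\mathbb{R}^N\to C([0,T])$.

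\emph{Each kernel is proper.} Fix $v$ outside $\bigcup_{k=0}^{K-1}F_k$. Since $X_M^f(0)=f$, we have $\Psi_v(f)(0)=(Lf)_v$. Taking $f=e_v$ gives $(Le_v)_v=\deg(v)$. This is positive because $\Gamma$ is connected with at least two vertices, so every vertex has a neighbor. Hence $\Psi_v(e_v)\not\equiv 0$, and $\ker\Psi_v$ is a proper linear subspace of $\mathbb{R}^N$. It is closed, since linear subspaces of a finite-dimensional space are closed.

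\emph{Complement of each kernel is generic.} A proper subspace has dimension at most $N-1$. It therefore has empty interior and Lebesgue measure zero. Its complement is open, because the subspace is closed. The complement is dense, because the subspace has empty interior. It has full measure, because the subspace is null.

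\emph{Finite intersection.} By definition,
\begin{equation*}
\mathcal{G}_{\text{top}}=\bigcap_{v}\bigl(\mathbb{R}^N\setminus\ker\Psi_v\bigr),
\end{equation*}
where $v$ ranges over the finitely many vertices of $V\setminus\bigcup_{k=0}^{K-1}F_k$. Equivalently, $\mathcal{G}_{\text{top}}$ is the complement of the finite union $\bigcup_v\ker\Psi_v$ of proper subspaces. A finite union of closed null sets with empty interior is again closed and null, and it has empty interior by Baire's theorem (or simply because it is a null set). Hence $\mathcal{G}_{\text{top}}$ is open, dense, and of full measure.

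The only step needing real care is the properness of each kernel. It is worth emphasizing that this uses only evaluation at $t=0$, so neither a simple spectrum nor any condition on $T$ is required for this part.
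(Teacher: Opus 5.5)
Your proof is correct and follows the same route as the paper. The paper also argues through linearity and continuity of $\Psi_v$, then properness of $\ker\Psi_v$ via $\Psi_v(e_v)(0)=(Le_v)_v=\deg(v)>0$, then genericity of the complement of a finite union of proper subspaces; your added details (the $M$-orthonormal eigen-expansion giving linearity without spectral simplicity, and the remark that only evaluation at $t=0$ is needed) are accurate elaborations rather than a different argument.
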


Together, these lemmas show that, once the geometric frontier conditions are satisfied, the analytic nondegeneracy assumptions required for simultaneous recovery hold generically in the mass matrix and initial signal.

\subsection{Uniqueness}
	
	\begin{thm}
		\label{thm:frontier-propagation}
		Let $\varphi: V \to \{0, \dots, K\}$ be a surjective graph function. For each $k \in \{0, \dots, K-1\}$, fix a frontier set $F_k \subseteq \widehat{F}_k$ with associated coupling matrix $A_k$. Fix $T > 0$, $f,\widetilde{f} \in \mathbb{R}^N$, and $M, \widetilde{M} \in \mathcal{D}_{++}^N$. Let $X$ and $\widetilde{X}$ denote the corresponding solutions of the mass-form graph wave equation. Suppose that $f \in \mathcal{J}$.

Suppose that
\begin{equation}
    \label{eq:propagation-condition}
    \rank A_k = |C_{k+1}|
\end{equation}
and
\begin{equation}
    \ker A_k^T \neq \{0\}
\end{equation}
for each $k \in \{0,\dots,K-1\}$.

For each $k$, let $Q_k$ be any matrix whose columns
form a basis of $\ker A_k^T$. Assume further that
\begin{equation}
    \label{eq:nondegeneracy-condition}
    Q_k^T R_{F_k} E_{uu}\mathbf{1} \neq 0
\end{equation}
for every $u \in F_k$. Suppose also that
		\begin{equation}
			\label{eq:nondegeneracy-off-frontier-condition}
			(LX)_j \not\equiv 0
			\qquad
			\text{on } [0,T]
			\qquad
			\text{for every }
			j \in V \setminus \bigcup_{k = 0}^{K - 1}F_k.
		\end{equation}
		If in addition
		\begin{equation*}
			R_{S_0}X(t) = R_{S_0}\widetilde{X}(t)
			\qquad
			\text{for every } t \in [0,T],
		\end{equation*}
		then
		\begin{equation*}
			M = \widetilde{M}
			\qquad
			\text{and}
			\qquad
			f = \widetilde{f}.
		\end{equation*}
	\end{thm}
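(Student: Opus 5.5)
We mirror the proof of Theorem \ref{thm:2-witness-zero-forcing}, replacing the pair of witnesses by the whole frontier $F_k$. Set $W := X - \widetilde{X}$, so that, exactly as in \eqref{eq:forced-wave},
\begin{equation*}
	\widetilde{M}W_{tt} + LW = (\widetilde{M} - M)X_{tt},
\end{equation*}
and $W_i \equiv 0$ on $[0,T]$ for $i \in S_0$. We argue by induction on $k$, with the hypothesis that $W \equiv 0$ on $S_k$. The inductive step should yield $W\equiv 0$ on $C_{k+1}$ and $m_u=\widetilde m_u$ for $u\in F_k$.

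For the inductive step, fix $u \in F_k \subseteq \widehat{F}_k \subseteq S_k$. Then $W_u \equiv 0$, hence $W_{tt,u}\equiv 0$. Since $R_{\{u\}}LR_{V\setminus S_{k+1}}^T = 0$, the row $(LW)_u$ involves only entries of $W$ on $S_{k+1}$. Entries on $S_k$ vanish by induction, so $(LW)_u = (LR_{C_{k+1}}^T R_{C_{k+1}}W)_u$. Collecting these rows over $F_k$ and writing $w := R_{C_{k+1}}W$ gives
\begin{equation*}
	-A_k w(t) = D_k\, R_{F_k}X_{tt}(t), \qquad D_k := R_{F_k}(\widetilde{M}-M)R_{F_k}^T,
\end{equation*}
where $D_k=\diag(\delta_u)_{u\in F_k}$. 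Multiplying on the left by $Q_k^T$ annihilates the left side, so for every $t\in[0,T]$
\begin{equation*}
	\sum_{u\in F_k} \delta_u X_{tt,u}(t)\, Q_k^T R_{F_k}E_{uu}\mathbf{1} = 0,
\end{equation*}
that is, $\sum_u \delta_u X_{tt,u}(t)\, q_u = 0$ with $q_u$ the $u$-th row of $Q_k$ viewed as a vector. Taking any coordinate $\ell$ of this vector identity gives the scalar relation $\sum_u (\delta_u q_{u,\ell}) X_{tt,u} \equiv 0$. Since $f\in\mathcal J$, the functions $X_{tt,u}$, $u\in F_k$, are linearly independent on $[0,T]$, so $\delta_u q_{u,\ell}=0$ for all $u,\ell$. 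By \eqref{eq:nondegeneracy-condition} each $q_u\neq 0$, hence $\delta_u=0$, i.e.\ $m_u=\widetilde m_u$ on $F_k$.

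Then $A_k w\equiv 0$, and \eqref{eq:propagation-condition} (full column rank) gives $w\equiv 0$, so $W\equiv0$ on $S_{k+1}$. Surjectivity of $\varphi$ and induction give $W\equiv 0$ on $S_K=V$. Evaluating at $t=0$ yields $f=\widetilde f$, and the masses agree on $\bigcup_k F_k$.

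For $j\notin\bigcup_k F_k$, the equation at $j$ with $W\equiv0$ reads $(\widetilde m_j-m_j)X_{tt,j}=0$. Since $X_{tt,j}=-m_j^{-1}(LX)_j\not\equiv0$ by \eqref{eq:nondegeneracy-off-frontier-condition}, we get $m_j=\widetilde m_j$.

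The main obstacle is the elimination step. One must check that the frontier rows of $LW$ really see only $C_{k+1}$; this is where the eligibility condition defining $\widehat F_k$ is used. One must also correctly convert the vector relation involving $Q_k$ into scalar linear relations among the accelerations, and this conversion is exactly where \eqref{eq:nondegeneracy-condition} enters. The remaining steps are routine.
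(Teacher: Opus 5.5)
Your proposal is correct and follows the paper's proof essentially step for step. It uses the same forced difference equation and the eligibility condition to reduce the frontier rows to $-A_kR_{C_{k+1}}W$, eliminates $R_{C_{k+1}}W$ with $Q_k^T$, combines acceleration independence with the nondegeneracy condition to get $m_u=\widetilde m_u$ on $F_k$, propagates via full column rank, and finishes the off-frontier masses by nonharmonicity. Your coordinatewise reading of $\sum_u \delta_u X_{tt,u}\, q_u = 0$ only makes explicit a step the paper leaves implicit, and $S_K=V$ already follows from the codomain of $\varphi$ without needing surjectivity.
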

	
	The geometric conditions above have two complementary roles. The full column rank of \(A_k\) allows information to propagate to the next contour, while the nontrivial left kernel provides redundant relations among the frontier equations that eliminate the unknown values on that contour. The nondegeneracy condition ensures that every frontier vertex participates in at least one such relation, allowing its mass discrepancy to be identified. More precisely, for \(u\in F_k\), let \(e_u\) denote the corresponding coordinate vector in \(\mathbb R^{|F_k|}\). Since \(\ker A_k^T=(\operatorname{Ran}A_k)^\perp\), condition \(\eqref{eq:nondegeneracy-condition}\) is equivalent to \(e_u\notin\operatorname{Ran}A_k\). In particular, this condition is independent of the chosen basis \(Q_k\).
	
	\begin{proof}
		Suppose that
		\begin{equation*}
			R_{S_0}X(t) = R_{S_0}\widetilde{X}(t)
			\qquad
			\text{for every } t \in [0, T].
		\end{equation*}
		Set
		\begin{equation*}
			W := X - \widetilde{X}.
		\end{equation*}
		The equality of the observations on the zeroth level gives immediately that
		\begin{equation*}
			R_{S_0}W \equiv 0.
		\end{equation*}
		Subtracting the two wave equations gives the forced wave equation
		\begin{equation}
			\label{eq:frontier-forced-wave}
			\widetilde{M}W_{tt} + LW = (\widetilde{M} - M)X_{tt}.
		\end{equation}
		
		Suppose by way of induction that
		\begin{equation*}
			R_{S_k}W \equiv 0
		\end{equation*}
		for some $k \in \{0, \dots, K - 1\}$. Since $F_k \subseteq S_k$, one has
		\begin{equation*}
			R_{F_k}W \equiv 0
			\qquad
			\text{and}
			\qquad
			R_{F_k}W_{tt} \equiv 0.
		\end{equation*}
		Moreover, the definition of $F_k$ gives
		\begin{equation*}
			R_{F_k}LR_{V \setminus S_{k + 1}}^T = 0.
		\end{equation*}
		Decomposing $W$ according to
		\begin{equation*}
			V = S_k \sqcup C_{k + 1} \sqcup (V \setminus S_{k + 1}),
		\end{equation*}
		it follows that
		\begin{align*}
			R_{F_k}LW
			& = R_{F_k}LR_{S_k}^T R_{S_k}W + R_{F_k}LR_{C_{k + 1}}^T R_{C_{k + 1}}W + R_{F_k}LR_{V \setminus S_{k + 1}}^T R_{V \setminus S_{k + 1}}W \\
			& = R_{F_k}LR_{C_{k + 1}}^T R_{C_{k + 1}}W \\
			& = -A_k R_{C_{k + 1}}W.
		\end{align*}
		
		Restricting \eqref{eq:frontier-forced-wave} to $F_k$, the first term vanishes, and hence
		\begin{equation*}
			-A_k R_{C_{k + 1}}W = R_{F_k}(\widetilde{M} - M)X_{tt}.
		\end{equation*}
		Since $M$ and $\widetilde{M}$ are diagonal,
		\begin{equation*}
			\widetilde{M} - M =	\sum_{u \in V}(\widetilde{m}_u - m_u)E_{uu}.
		\end{equation*}
		Because $R_{F_k}E_{uu} = 0$ whenever $u \notin F_k$, one obtains
		\begin{equation*}
			-A_k R_{C_{k + 1}}W	= \sum_{u \in F_k} (\widetilde{m}_u - m_u)R_{F_k}E_{uu}X_{tt}.
		\end{equation*}
		Multiplying by $Q_k^T$ and using $Q_k^T A_k = 0$ gives
		\begin{equation*}
			0 =	\sum_{u \in F_k} (\widetilde{m}_u - m_u) Q_k^T R_{F_k}E_{uu}X_{tt}.
		\end{equation*}
		Since
		\begin{equation*}
			E_{uu}X_{tt} = X_{tt,u}E_{uu}\mathbf{1},
		\end{equation*}
		this becomes
		\begin{equation*}
			\sum_{u \in F_k} X_{tt,u} (\widetilde{m}_u - m_u) Q_k^T R_{F_k}E_{uu}\mathbf{1} = 0.
		\end{equation*}
		
		By the defining property of $\mathcal{J}$, the functions
		\begin{equation*}
			\left\{X_{tt,u} \mid u \in F_k \right\}
		\end{equation*}
		are linearly independent on $[0,T]$. Therefore
		\begin{equation*}
			(\widetilde{m}_u - m_u)	Q_k^T R_{F_k}E_{uu}\mathbf{1} =	0
		\end{equation*}
		for every $u \in F_k$. By the nondegeneracy condition \eqref{eq:nondegeneracy-condition},
		\begin{equation*}
			Q_k^T R_{F_k}E_{uu}\mathbf{1} \neq 0,
		\end{equation*}
		and hence
		\begin{equation*}
			m_u = \widetilde{m}_u \qquad \text{for every } u \in F_k.
		\end{equation*}
		
		It follows that
		\begin{equation*}
			A_k R_{C_{k + 1}}W = 0.
		\end{equation*}
		Since $A_k$ has full column rank,
		\begin{equation*}
			R_{C_{k + 1}}W \equiv 0.
		\end{equation*}
		Consequently,
		\begin{equation*}
			R_{S_{k + 1}}W \equiv 0,
		\end{equation*}
		which completes the induction step.
		
		Beginning with $R_{S_0}W \equiv 0$ and iterating through
		$k = 0, \dots, K - 1$, one obtains
		\begin{equation*}
			R_{S_K}W \equiv 0.
		\end{equation*}
		Since $S_K = V$, it follows that $W \equiv 0$ and therefore $X = \widetilde{X}$. Evaluating at $t = 0$ gives $f = X(0) = \widetilde{X}(0) = \widetilde{f}$.
		
		It remains to recover the masses off the frontier. Since $W \equiv 0$, equation \eqref{eq:frontier-forced-wave} reduces to
		\begin{equation*}
			(\widetilde{M} - M)X_{tt} = 0.
		\end{equation*}
		Thus, for every $j \in V \setminus \bigcup_{k = 0}^{K - 1}F_k$,
		\begin{equation*}
			(\widetilde{m}_j - m_j)X_{tt,j} = 0.
		\end{equation*}
		On the other hand,
		\begin{equation*}
			(LX)_j = -m_j X_{tt,j} \not\equiv 0
		\end{equation*}
		by \eqref{eq:nondegeneracy-off-frontier-condition}. Since $m_j > 0$, one has
		\begin{equation*}
			X_{tt,j} \not\equiv 0,
		\end{equation*}
		and consequently
		\begin{equation*}
			m_j = \widetilde{m}_j \qquad  \text{for every } j \in V \setminus \bigcup_{k = 0}^{K - 1}F_k.
		\end{equation*}
		Together with the mass identities obtained on the frontier sets, this gives	$M = \widetilde{M}$. Therefore
		\begin{equation*}
			M = \widetilde{M} \qquad \text{and} \qquad	f = \widetilde{f},
		\end{equation*}
		as desired.
	\end{proof}

	\begin{rmk}
		Let $\tau : V \to \{0, \dots, K\}$ be the zero forcing time function associated with a fixed sequential $2$-witness zero forcing sequence. Then
		\begin{equation*}
			C_0 = S_0
		\end{equation*}
		and
		\begin{equation*}
			C_{k + 1} = \{w_k\}
		\end{equation*}
		for each $k \in \{0, \dots, K - 1\}$, where $w_k$ is the vertex forced at stage $k$. Let $u_k$ and $v_k$ be the two witnesses forcing $w_k$. Since $w_k$ is the unique neighbor of both $u_k$ and $v_k$ outside $S_k$, one has
		\begin{equation*}
			u_k, v_k \in \widehat{F}_k.
		\end{equation*}
		Thus one may take
		\begin{equation*}
			F_k = \{u_k, v_k\}, \qquad C_{k + 1} = \{w_k\}.
		\end{equation*}
		Ordering the rows of $A_k$ by $u_k$ and $v_k$ gives
		\begin{equation*}
			A_k =
			\begin{pmatrix}
				1 \\
				1
			\end{pmatrix}.
		\end{equation*}
		Thus $A_k$ has full column rank and
		\begin{equation*}
			Q_k :=
			\begin{pmatrix}
				1 \\
				-1
			\end{pmatrix}
		\end{equation*}
		spans $\ker A_k^T$. Moreover,
		\begin{equation*}
			Q_k^T R_{F_k} E_{u_k u_k} \textbf{1} = 1 \qquad \text{and} \qquad Q_k^T R_{F_k} E_{v_k v_k} \textbf{1} = -1.
		\end{equation*}
		Hence the nondegeneracy condition \eqref{eq:nondegeneracy-condition} is satisfied at every stage.
		
		The genericity hypothesis of Theorem \ref{thm:2-witness-zero-forcing} implies that
		\begin{equation*}
			X_{tt, u_k} \qquad \text{and} \qquad X_{tt, v_k}
		\end{equation*}
		are linearly independent on $[0, T]$. Thus $f \in \mathcal{J}$ for the chosen frontier sets.
		
		Moreover, the union of the chosen frontier sets is precisely the set of forcing vertices:
		\begin{equation*}
			P = \bigcup_{k = 0}^{K - 1} F_k.
		\end{equation*}
		Since $f \in \mathcal{G}$, no vertex acceleration vanishes identically, and hence
		\begin{equation*}
			(LX)_j \not\equiv 0 \qquad \text{on } [0, T]
		\end{equation*}
		for every
		\begin{equation*}
			j \in V \setminus \bigcup_{k = 0}^{K - 1} F_k.
		\end{equation*}
		This is precisely the residual nondegeneracy condition in Theorem \ref{thm:frontier-propagation}. Consequently, Theorem \ref{thm:2-witness-zero-forcing} follows as a corollary of Theorem \ref{thm:frontier-propagation}.
		
		Finally, the conditions
		\begin{equation*}
			\rank A_k = |C_{k + 1}| \qquad \text{and} \qquad \ker A_k^T \neq \{0\}
		\end{equation*}
		imply that
		\begin{equation*}
			|F_k| > |C_{k + 1}|.
		\end{equation*}
		When $C_{k + 1}$ consists of a single vertex, the smallest possible frontier therefore consists of two vertices. In this sense, sequential $2$-witness zero forcing is the minimal nontrivial instance of the frontier-propagation mechanism.
	\end{rmk}
	
	The following example illustrates that Theorem \ref{thm:frontier-propagation} applies in substantially more generality than $r$-witness forcing. 
	
	\begin{exmp}
		\label{exmp:funnel}
		Consider a layered graph whose contour sets decrease in size by one at each stage. More precisely, let
		\begin{equation*}
			C_k = \{v_{k,0}, \dots, v_{k,m-k}\},
		\end{equation*}
		and join each vertex $v_{k + 1,j} \in C_{k + 1}$ to the two vertices $v_{k,j}$ and $v_{k,j + 1}$ of $C_k$. Assume that there are no edges between nonconsecutive contour sets. The resulting graph has the form of a triangular funnel, as illustrated in Figure \ref{fig:funnel}.
		
		\begin{figure}[H]
\centering
    \begin{tikzpicture}[scale = 1.05]
        \coordinate (a0) at (-3,0);
        \coordinate (a1) at (-1.5,0);
        \coordinate (a2) at (0,0);
        \coordinate (a3) at (1.5,0);
        \coordinate (a4) at (3,0);
        
        \coordinate (b0) at (-2.25,-1.3);
        \coordinate (b1) at (-0.75,-1.3);
        \coordinate (b2) at (0.75,-1.3);
        \coordinate (b3) at (2.25,-1.3);
        
        \coordinate (c0) at (-1.5,-2.6);
        \coordinate (c1) at (0,-2.6);
        \coordinate (c2) at (1.5,-2.6);
        
        \coordinate (d0) at (-0.75,-3.9);
        \coordinate (d1) at (0.75,-3.9);
        
        \coordinate (e0) at (0,-5.2);
        
        \draw (a0) -- (b0);
        \draw (a1) -- (b0);
        \draw (a1) -- (b1);
        \draw (a2) -- (b1);
        \draw (a2) -- (b2);
        \draw (a3) -- (b2);
        \draw (a3) -- (b3);
        \draw (a4) -- (b3);
        
        \draw (b0) -- (c0);
        \draw (b1) -- (c0);
        \draw (b1) -- (c1);
        \draw (b2) -- (c1);
        \draw (b2) -- (c2);
        \draw (b3) -- (c2);
        
        \draw (c0) -- (d0);
        \draw (c1) -- (d0);
        \draw (c1) -- (d1);
        \draw (c2) -- (d1);
        
        \draw (d0) -- (e0);
        \draw (d1) -- (e0);
        
        \node[observed] at (a0) {};
        \node[observed] at (a1) {};
        \node[observed] at (a2) {};
        \node[observed] at (a3) {};
        \node[observed] at (a4) {};
        
        \node[forced] at (b0) {$1$};
        \node[forced] at (b1) {$1$};
        \node[forced] at (b2) {$1$};
        \node[forced] at (b3) {$1$};
        
        \node[forced] at (c0) {$2$};
        \node[forced] at (c1) {$2$};
        \node[forced] at (c2) {$2$};
        
        \node[forced] at (d0) {$3$};
        \node[forced] at (d1) {$3$};
        
        \node[forced] at (e0) {$4$};
    \end{tikzpicture}
    \caption{A layered funnel graph satisfying the hypotheses of Theorem \ref{thm:frontier-propagation}.}
    \label{fig:funnel}
\end{figure}
		
		Take $C_0$ as the observation set and let $\varphi$ assign to each vertex its contour index. Then $F_k = C_k$. After ordering the vertices in the natural way, the coupling matrix between successive contours is
		\begin{equation*}
			A_k =
			\begin{pmatrix}
				1 & 0 & \cdots & 0 \\
				1 & 1 & \ddots & \vdots \\
				0 & 1 & \ddots & 0 \\
				\vdots & \ddots & \ddots & 1 \\
				0 & \cdots & 0 & 1
			\end{pmatrix},
		\end{equation*}
		where $A_k$ has $|C_{k + 1}| + 1$ rows and $|C_{k + 1}|$ columns. In particular,
		\begin{equation*}
			\rank A_k = |C_{k + 1}|.
		\end{equation*}
		Moreover,
		\begin{equation*}
			\ker A_k^T = \text{span}\{(1,-1,1,-1,\dots)^T\}.
		\end{equation*}
		Since every entry of this left-kernel vector is nonzero, if $Q_k$ is chosen to have this vector as its single column, then
		\begin{equation*}
			Q_k^T R_{F_k} E_{uu} \textbf{1} \neq 0
		\end{equation*}
		for every $u \in F_k$. Thus the geometric hypotheses of Theorem \ref{thm:frontier-propagation} are satisfied at every stage.
		
		This example is not, in general, an instance of $r$-witness zero forcing for any $r \geq 2$. At the first stage, for example, the two vertices at the ends of $C_k$ each have a unique uncolored neighbor in $C_{k + 1}$, while the interior vertices of $C_k$ have two uncolored neighbors there. Hence no vertex of $C_{k + 1}$ has two witnesses in the sense of $2$-witness zero forcing. The redundancy is instead distributed across the entire frontier and is detected by the nontrivial left kernel of $A_k$. The example therefore illustrates the additional flexibility afforded by the matrix formulation of Theorem \ref{thm:frontier-propagation}.
	\end{exmp}
	
	We now consider a substantial weakening of the nondegeneracy hypotheses in Theorem \ref{thm:frontier-propagation} which is permitted in the case where the masses $m_i$ are constant along the contour sets $C_k$. Such a weakening is permitted since the mass discrepancies along each contour then reduce to a single scalar unknown. The resulting uniqueness statement therefore requires only one nontrivial left-kernel projection at each stage of the propagation.
	
	\begin{cor}[Uniqueness for contour-constant masses]
		\label{cor:level-set-uniqueness}
		Let $\varphi : V \to \{0, \dots, K\}$ be a surjective graph function, and suppose that, for each $k \in \{0, \dots, K - 1\}$, there is a frontier set $F_k \subseteq C_k$ such that
		\begin{equation*}
			\operatorname{rank} A_k = |C_{k + 1}|.
		\end{equation*}
		Fix $T > 0$, $f,\widetilde{f} \in \mathbb{R}^N$, and $M,\widetilde{M} \in \mathcal{D}_{++}^N$, and let $X$ and $\widetilde{X}$ denote the corresponding solutions of the mass-form graph wave equation. Suppose that, for every $k \in \{0, \dots, K\}$, there are constants $\mu_k,\widetilde{\mu}_k > 0$ such that
		\begin{align}
			R_{C_k}MR_{C_k}^T &= \mu_k I_{|C_k|}, \label{eq:constant-on-contours} \\
			R_{C_k}\widetilde{M}R_{C_k}^T &= \widetilde{\mu}_k I_{|C_k|}. \label{eq:constant-on-contours-tilde}
		\end{align}
		Suppose further that, for each $k \in \{0, \dots, K - 1\}$, there exists $q_k \in \ker A_k^T$ such that
		\begin{equation*}
			q_k^T R_{F_k}X_{tt} \not\equiv 0 \qquad \text{on } [0, T],
		\end{equation*}
		and that there exists $j \in C_K$ such that
		\begin{equation*}
			(LX)_j \not\equiv 0 \qquad \text{on } [0, T].
		\end{equation*}
		If
		\begin{equation*}
			R_{S_0}X(t) = R_{S_0}\widetilde{X}(t) \qquad \text{for every } t \in [0, T],
		\end{equation*}
		then
		\begin{equation*}
			M = \widetilde{M} \qquad \text{and} \qquad f = \widetilde{f}.
		\end{equation*}
	\end{cor}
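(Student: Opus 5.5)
We follow the inductive scheme of the proof of Theorem \ref{thm:frontier-propagation}. The only change is that on each frontier the mass discrepancy is a single scalar. Set $W := X - \widetilde{X}$, so that $R_{S_0}W \equiv 0$ and $W$ satisfies the forced wave equation \eqref{eq:frontier-forced-wave}. We assume inductively that $R_{S_k}W \equiv 0$. We also carry along $\mu_i = \widetilde{\mu}_i$ for $i < k$, although the induction step does not really use this. One point needs care: the corollary asks only $F_k \subseteq C_k$, not $F_k \subseteq \widehat{F}_k$. We will read the frontier set as an admissible frontier, i.e.\ $F_k \subseteq C_k \cap \widehat{F}_k$. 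This is what makes $R_{F_k}LR_{V\setminus S_{k+1}}^T = 0$ hold, so that the decomposition of $R_{F_k}LW$ goes through verbatim. As before, this gives
\begin{equation*}
	-A_k R_{C_{k+1}}W = R_{F_k}(\widetilde{M} - M)X_{tt}.
\end{equation*}

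The new ingredient enters here. Since $F_k \subseteq C_k$ and the masses are constant on $C_k$ by \eqref{eq:constant-on-contours}--\eqref{eq:constant-on-contours-tilde}, we have $R_{F_k}(\widetilde{M} - M) = (\widetilde{\mu}_k - \mu_k)R_{F_k}$. Hence
\begin{equation*}
	-A_k R_{C_{k+1}}W = (\widetilde{\mu}_k - \mu_k)R_{F_k}X_{tt}.
\end{equation*}
We multiply on the left by $q_k^T$ and use $q_k^T A_k = 0$ to get $(\widetilde{\mu}_k - \mu_k)\, q_k^T R_{F_k}X_{tt} \equiv 0$. The hypothesis $q_k^T R_{F_k}X_{tt} \not\equiv 0$ then yields $\mu_k = \widetilde{\mu}_k$. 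Consequently $A_k R_{C_{k+1}}W \equiv 0$, and full column rank gives $R_{C_{k+1}}W \equiv 0$, which closes the induction. Note that neither $f \in \mathcal{J}$ nor the per-vertex nondegeneracy \eqref{eq:nondegeneracy-condition} is needed; a single scalar relation suffices. If $\ker A_k^T = \{0\}$, the hypothesis on $q_k$ cannot hold, so no separate case arises.

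After $K$ steps we have $W \equiv 0$, so $X = \widetilde{X}$ and $f = \widetilde{f}$ by evaluating at $t=0$. We also have $\mu_k = \widetilde{\mu}_k$ for $k \le K-1$, which covers every vertex in $S_{K-1}$, since these contours are entirely constant-mass. It remains to handle the last contour $C_K$. From \eqref{eq:frontier-forced-wave} with $W \equiv 0$ we get $(\widetilde{\mu}_K - \mu_K)X_{tt,j} \equiv 0$ for the given $j \in C_K$. Since $m_jX_{tt,j} = -(LX)_j \not\equiv 0$ and $m_j > 0$, it follows that $\mu_K = \widetilde{\mu}_K$. Hence $M = \widetilde{M}$.

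The step I expect to be the main obstacle, and the one to state carefully, is the frontier admissibility issue above. The vanishing of the coupling to $V \setminus S_{k+1}$ is essential for eliminating the unknown values beyond the next contour. I will therefore make explicit that $F_k$ is taken inside $\widehat{F}_k$ (the ``frontier set'' terminology), rather than merely inside $C_k$. Everything else is a direct specialization of the proof of Theorem \ref{thm:frontier-propagation}.
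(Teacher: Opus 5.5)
Your proposal is correct and follows the paper's proof essentially step for step. The paper runs the same frontier induction: the discrepancy on $F_k\subseteq C_k$ collapses to the scalar $\widetilde{\mu}_k-\mu_k$, it is eliminated by $q_k$, full column rank of $A_k$ propagates to $C_{k+1}$, and nonharmonicity at $j\in C_K$ recovers $\mu_K$. Your reading of $F_k$ as a subset of $\widehat{F}_k\cap C_k$ is exactly what the paper's definition of a frontier set means, and the paper uses it implicitly.
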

	
	\begin{proof}
		Set $W := X - \widetilde{X}$ and suppose inductively that $R_{S_k}W \equiv 0$. Since $F_k \subseteq C_k$, restricting the difference equation to $F_k$ gives
		\begin{equation*}
			-A_kR_{C_{k + 1}}W = (\widetilde{\mu}_k - \mu_k)R_{F_k}X_{tt}.
		\end{equation*}
		Multiplying by $q_k^T$ gives
		\begin{equation*}
			0 = (\widetilde{\mu}_k - \mu_k)q_k^T R_{F_k}X_{tt},
		\end{equation*}
		so the nondegeneracy hypothesis implies $\mu_k = \widetilde{\mu}_k$. Hence $A_kR_{C_{k + 1}}W = 0$, and the full column rank of $A_k$ gives $R_{C_{k + 1}}W \equiv 0$. Iterating from $R_{S_0}W \equiv 0$ gives $W \equiv 0$, and therefore $f = \widetilde{f}$.
		
		It remains only to recover the mass on $C_K$. Since $W \equiv 0$, the difference equation at the chosen vertex $j \in C_K$ gives
		\begin{equation*}
			(\widetilde{\mu}_K - \mu_K)X_{tt,j} = 0.
		\end{equation*}
		Since $X_{tt,j} = -\mu_K^{-1}(LX)_j \not\equiv 0$, it follows that $\mu_K = \widetilde{\mu}_K$. Thus $M = \widetilde{M}$, completing the proof.
	\end{proof}

\subsection{Lipschitz stability}
	
	To obtain a Lipschitz stability estimate for the simultaneous recovery, we apply a theorem of Bourgeois. Fix a surjective graph function
	\begin{equation*}
		\varphi : V \to \{0, \dots, K\}
	\end{equation*}
	satisfying the geometric hypotheses of Theorem \ref{thm:frontier-propagation}, and let
	\begin{equation*}
		\mathcal{A} \subseteq \mathcal{D}_{++}^N \times \mathbb{R}^N
	\end{equation*}
	denote the set of pairs $(M, f)$ satisfying the hypotheses of that theorem. In particular, $f \in \mathcal{J}$, and
	\begin{equation*}
		(LX_M^f)_j \not\equiv 0
		\qquad
		\text{on } [0, T]
	\end{equation*}
	for every $j \in V \setminus \bigcup_{k = 0}^{K - 1}F_k$.
	
	The acceleration-independence and nonharmonicity conditions are open by continuous dependence on $(M,f)$, so $\mathcal{A}$ is open. Fix a distinguished parameter
	\begin{equation*}
		\theta_* = (M_*, f_*) \in \mathcal{A}.
	\end{equation*}
	There is a bounded open rectangle $\mathcal{V}$ containing $\theta_*$ such that
	\begin{equation*}
		\Theta := \overline{\mathcal{V}} \subseteq \mathcal{A}.
	\end{equation*}
	Thus $\Theta$ is compact and convex.
	
	Define the parameter-to-data map
	\begin{equation*}
		\Phi_T : \mathcal{A} \to L^2(0, T; \mathbb{R}^{|S_0|})
	\end{equation*}
	by
	\begin{equation*}
		\Phi_T(M, f) := R_{S_0}X_M^f.
	\end{equation*}
	The map $\Phi_T$ is smooth because the associated first-order system of ordinary differential equations depends smoothly on $M$ and $f$. Moreover, Theorem \ref{thm:frontier-propagation} implies that $\Phi_T$ is injective on $\mathcal{A}$.
	
	To compute its Fréchet derivative, fix
	\begin{equation*}
		\theta = (M, f) \in \mathcal{A},
	\end{equation*}
	a diagonal perturbation
	\begin{equation*}
		H = \diag(h_1, \dots, h_N),
	\end{equation*}
	and a perturbation
	\begin{equation*}
		g \in \mathbb{R}^N.
	\end{equation*}
	Define
	\begin{equation*}
		Z(t) = \left. \frac{d}{d \varepsilon} X_{M + \varepsilon H}^{f + \varepsilon g}(t) \right|_{\varepsilon = 0}.
	\end{equation*}
	Since
	\begin{equation*}
		X_{M + \varepsilon H}^{f + \varepsilon g}(0) = f + \varepsilon g
	\end{equation*}
	and the initial velocity vanishes for every $\varepsilon$, one has
	\begin{equation*}
		Z(0) = g,
		\qquad
		Z_t(0) = 0.
	\end{equation*}
	
	Differentiating
	\begin{equation*}
		(M + \varepsilon H)\left(X_{M + \varepsilon H}^{f + \varepsilon g}\right)_{tt} = -LX_{M + \varepsilon H}^{f + \varepsilon g}
	\end{equation*}
	at $\varepsilon = 0$ gives
	\begin{equation*}
		H(X_M^f)_{tt} + MZ_{tt} = -LZ.
	\end{equation*}
	Consequently,
	\begin{equation}
		\label{eq:frechet-derivative}
		D\Phi_T(\theta)[H, g] = R_{S_0}Z,
	\end{equation}
	where $Z$ is the solution of
	\begin{equation}
		\label{eq:variation-wave-equation}
		\begin{cases}
			MZ_{tt} + LZ = -H(X_M^f)_{tt}, \\
			Z(0) = g, \\ 
			Z_t(0) = 0.
		\end{cases}
	\end{equation}
	
	\begin{lemma}[Injectivity of the Fréchet derivative]
		\label{lemma:frechet-injectivity}
		For every $\theta \in \mathcal{A}$, the linear map
		\begin{equation*}
			D\Phi_T(\theta)
		\end{equation*}
		is injective.
	\end{lemma}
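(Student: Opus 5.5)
The plan is to repeat the inductive argument of Theorem \ref{thm:frontier-propagation} in linearized form, with the variation $Z$ in place of the difference $W$ and the perturbation $H$ in place of the mass discrepancy $\widetilde{M} - M$. Suppose $D\Phi_T(\theta)[H,g] = 0$. By \eqref{eq:frechet-derivative}, $R_{S_0}Z \equiv 0$ on $[0,T]$, where $Z$ solves \eqref{eq:variation-wave-equation}. The aim is to show $Z \equiv 0$, which gives $g = Z(0) = 0$, and then to show $H = 0$.

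The induction runs over the contour levels. Assume $R_{S_k}Z \equiv 0$ for some $k \in \{0,\dots,K-1\}$. Then $R_{F_k}Z \equiv 0$ and $R_{F_k}Z_{tt} \equiv 0$, since $F_k \subseteq S_k$. The decomposition $V = S_k \sqcup C_{k+1} \sqcup (V\setminus S_{k+1})$ and the definition of $\widehat{F}_k$ give, exactly as before,
\begin{equation*}
R_{F_k}LZ = -A_kR_{C_{k+1}}Z.
\end{equation*}
Restricting \eqref{eq:variation-wave-equation} to $F_k$ then yields
\begin{equation*}
-A_kR_{C_{k+1}}Z = -\sum_{u\in F_k} h_u X_{tt,u} R_{F_k}E_{uu}\mathbf{1}.
\end{equation*}
Multiplying by $Q_k^T$ kills the left side. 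Linear independence of $\{X_{tt,u} \mid u\in F_k\}$, which holds because $f\in\mathcal{J}$, gives $h_u Q_k^TR_{F_k}E_{uu}\mathbf{1} = 0$ for each $u \in F_k$. The nondegeneracy condition \eqref{eq:nondegeneracy-condition} then forces $h_u = 0$ for all $u\in F_k$. The right-hand side therefore vanishes, so $A_kR_{C_{k+1}}Z \equiv 0$, and full column rank gives $R_{C_{k+1}}Z\equiv 0$. This completes the induction step.

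Iterating to $k = K-1$ gives $Z\equiv 0$ on $[0,T]$, so $g = 0$. The remaining components of $H$ come from \eqref{eq:variation-wave-equation} with $Z \equiv 0$, which reduces to $H X_{tt} = 0$. For $j\in V\setminus\bigcup_k F_k$ this reads $h_jX_{tt,j} = 0$. Since $X_{tt,j} = -m_j^{-1}(LX)_j \not\equiv 0$ by \eqref{eq:nondegeneracy-off-frontier-condition}, we get $h_j = 0$. Hence $H = 0$, and $D\Phi_T(\theta)$ is injective.

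There is no serious obstacle: the argument is exactly linear because the forcing term $-H(X_M^f)_{tt}$ involves only the unperturbed solution $X = X_M^f$, whose accelerations satisfy the independence hypotheses encoded in $\theta\in\mathcal{A}$. The only point needing care is the meaning of $Z \equiv 0$ on $[0,T]$. The data vanish as elements of $L^2(0,T)$, but $Z$ is smooth (indeed analytic in $t$), so vanishing almost everywhere implies vanishing identically. This justifies differentiating in $t$ and evaluating at $t=0$.
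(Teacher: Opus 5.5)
Your proposal is correct and follows the paper's proof essentially step for step. It runs the same frontier induction on the linearized equation with $H$ in place of $\widetilde{M}-M$, eliminates $A_kR_{C_{k+1}}Z$ by multiplying by $Q_k^T$, and uses acceleration independence from $f\in\mathcal{J}$, the full column rank of $A_k$, and nonharmonicity off the frontiers; your remark that continuity of $Z$ upgrades $L^2$ vanishing to pointwise vanishing is the same justification the paper gives.
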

	
	\begin{proof}
		Suppose that
		\begin{equation*}
			D\Phi_T(\theta)[H, g] = 0.
		\end{equation*}
		By \eqref{eq:frechet-derivative},
		\begin{equation*}
			R_{S_0}Z = 0
		\end{equation*}
		in $L^2(0, T; \mathbb{R}^{|S_0|})$. Since $Z$ is continuous, it follows that
		\begin{equation*}
			R_{S_0}Z(t) = 0
			\qquad
			\text{for every } t \in [0, T].
		\end{equation*}
		In particular,
		\begin{equation*}
			R_{S_0}g = 0.
		\end{equation*}
		
		Proceed by way of induction. Suppose that
		\begin{equation*}
			R_{S_k}Z = 0.
		\end{equation*}
		Since $F_k \subseteq S_k$, one has
		\begin{equation*}
			R_{F_k}Z_{tt} = 0.
		\end{equation*}
		Moreover, by the definition of $F_k$ and the fact that $Z$ vanishes on $S_k$,
		\begin{align*}
			R_{F_k}LZ
			&= R_{F_k}LR_{C_{k + 1}}^T R_{C_{k + 1}}Z \\
			&= -A_k R_{C_{k + 1}}Z.
		\end{align*}
		Restricting \eqref{eq:variation-wave-equation} to $F_k$ therefore gives
		\begin{equation*}
			A_k R_{C_{k + 1}}Z
			=
			R_{F_k}H(X_M^f)_{tt}.
		\end{equation*}
		Since $H$ is diagonal,
		\begin{equation*}
			R_{F_k}H(X_M^f)_{tt} = \sum_{u \in F_k}	h_u \left(X_M^f\right)_{tt,u} R_{F_k}E_{uu}\mathbf{1}.
		\end{equation*}
		Multiplication by $Q_k^T$ now yields
		\begin{equation*}
			0 = \sum_{u \in F_k} h_u \left(X_M^f\right)_{tt,u} Q_k^T R_{F_k}E_{uu}\mathbf{1},
		\end{equation*}
		because the columns of $Q_k$ span $\ker A_k^T$ and hence
		\begin{equation*}
			Q_k^T A_k = 0.
		\end{equation*}
		
		By the defining property of $\mathcal{J}$, the accelerations 
		\begin{equation*}
			\left\{ \left(X_M^f\right)_{tt,u} \ \middle| \ u \in F_k \right\}
		\end{equation*}
		are linearly independent on $[0, T]$. It follows that
		\begin{equation*}
			h_u Q_k^T R_{F_k}E_{uu}\mathbf{1} = 0
		\end{equation*}
		for every $u \in F_k$. The nondegeneracy condition
		\begin{equation*}
			Q_k^T R_{F_k}E_{uu}\mathbf{1} \neq 0
		\end{equation*}
		therefore implies
		\begin{equation*}
			h_u = 0 \qquad \text{for every } u \in F_k.
		\end{equation*}
		Consequently,
		\begin{equation*}
			A_k R_{C_{k + 1}}Z = 0.
		\end{equation*}
		Since $A_k$ has full column rank,
		\begin{equation*}
			R_{C_{k + 1}}Z = 0.
		\end{equation*}
		Thus
		\begin{equation*}
			R_{S_{k + 1}}Z = 0.
		\end{equation*}
		
		Since $S_K = V$, the induction gives $Z \equiv 0$, and evaluating at $t = 0$ gives $g = 0$. It also follows from \eqref{eq:variation-wave-equation} that
		\begin{equation*}
			H(X_M^f)_{tt} = 0.
		\end{equation*}
		The induction has already shown that $h_u = 0$ for every $u \in \bigcup_{k = 0}^{K - 1}F_k$. For every
		\begin{equation*}
			v \in V \setminus \bigcup_{k = 0}^{K - 1}F_k,
		\end{equation*}
		the nonharmonicity hypothesis gives
		\begin{equation*}
			\left(X_M^f\right)_{tt,v} = -\frac{1}{m_v}(LX_M^f)_v \not\equiv 0,
		\end{equation*}
		and hence $h_v = 0$. Therefore $H = 0$, and thus $(H,g) = (0,0)$, which proves that $D\Phi_T(\theta)$ is injective.
	\end{proof}
	
	\begin{prop}[\cite{Bourgeois2013}, Theorem 2.1]
		\label{prop:bourgeois-theorem}
		Let $\mathcal{U}$ be an open subset of a finite-dimensional Hilbert space, let $\mathcal{K} \subseteq \mathcal{U}$ be compact and convex, and let
		\begin{equation*}
			G \in C^1(\mathcal{U}; \mathcal{H})
		\end{equation*}
		for a Hilbert space $\mathcal{H}$. If $G$ is injective on $\mathcal{U}$ and $DG(\theta)$ is injective for every $\theta \in \mathcal{U}$, then there exists $K_{\text{stab}} > 0$ such that
		\begin{equation*}
			\norm{\theta - \widetilde{\theta}}
			\leq
			K_{\text{stab}}
			\norm{G(\theta) - G(\widetilde{\theta})}
		\end{equation*}
		for every $\theta, \widetilde{\theta} \in \mathcal{K}$.
	\end{prop}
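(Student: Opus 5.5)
The plan is to argue by contradiction, using compactness of $\mathcal{K}$, the injectivity of $G$, and the injectivity of $DG$. Suppose no such $K_{\text{stab}}$ exists. Then there are sequences $\theta_n \neq \widetilde{\theta}_n$ in $\mathcal{K}$ with
\begin{equation*}
	\norm{G(\theta_n) - G(\widetilde{\theta}_n)} < \frac{1}{n}\norm{\theta_n - \widetilde{\theta}_n}.
\end{equation*}
By compactness of $\mathcal{K}$, we pass to subsequences with $\theta_n \to \theta$ and $\widetilde{\theta}_n \to \widetilde{\theta}$, where $\theta, \widetilde{\theta} \in \mathcal{K}$. Since $\mathcal{K}$ is compact, it is bounded, so $\norm{\theta_n - \widetilde{\theta}_n}$ is bounded. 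Hence $G(\theta_n) - G(\widetilde{\theta}_n) \to 0$, and continuity of $G$ gives $G(\theta) = G(\widetilde{\theta})$. Injectivity of $G$ on $\mathcal{U}$ then forces $\theta = \widetilde{\theta}$. So the two sequences collapse onto a common limit point, and the problem reduces to a local one near $\theta$.

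For the local step, write $\delta_n := \norm{\theta_n - \widetilde{\theta}_n} > 0$ and $\eta_n := (\widetilde{\theta}_n - \theta_n)/\delta_n$. The parameter space is finite-dimensional, so the unit sphere is compact and, after a further subsequence, $\eta_n \to \eta$ with $\norm{\eta} = 1$. Convexity of $\mathcal{K}$ puts the whole segment $[\theta_n, \widetilde{\theta}_n]$ inside $\mathcal{K} \subseteq \mathcal{U}$. This lets us apply the fundamental theorem of calculus in the Hilbert space $\mathcal{H}$:
\begin{equation*}
	\frac{G(\widetilde{\theta}_n) - G(\theta_n)}{\delta_n} = \int_0^1 DG\bigl(\theta_n + s(\widetilde{\theta}_n - \theta_n)\bigr)\eta_n \, ds.
\end{equation*}
We need $DG$ to be uniformly continuous on $\mathcal{K}$ in operator norm. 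This holds because $G$ is $C^1$ and $\mathcal{K}$ is compact. Moreover, every point of the segment converges to $\theta$. Together these give that the right-hand side converges to $DG(\theta)\eta$. The left-hand side has norm less than $1/n$, so $DG(\theta)\eta = 0$ with $\eta \neq 0$. This contradicts the injectivity of $DG(\theta)$ and completes the proof.

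The main obstacle is the limit passage inside the integral. The remaining steps are routine consequences of compactness and continuity. The key points there are that convexity is exactly what keeps the segment inside the domain where $DG$ is defined and continuous, and that finite dimensionality is what supplies a convergent subsequence of unit directions with a nonzero limit. Without finite dimensionality, $\eta_n$ could converge weakly to $0$ and the argument would fail. For the application in the paper, we would take $\mathcal{U} = \mathcal{V}$, $\mathcal{K} = \Theta$, and $G = \Phi_T$. Injectivity of $\Phi_T$ comes from Theorem \ref{thm:frontier-propagation}, and injectivity of its derivative comes from Lemma \ref{lemma:frechet-injectivity}.
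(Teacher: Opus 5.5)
Your argument is correct and complete. The paper gives no proof of this proposition; it imports it from Bourgeois. Your proof by compactness and contradiction is the standard one for results of this type:
\begin{itemize}
\item injectivity of $G$ forces the two sequences to a common limit $\theta$;
\item the integral form of the mean value theorem, applied along normalized differences, passes to the limit;
\item finite dimensionality supplies a unit limit direction $\eta$ with $DG(\theta)\eta = 0$, a contradiction.
\end{itemize}

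Two small remarks. First, your argument uses less than the statement assumes.
\begin{itemize}
\item Injectivity of $G$ and of $DG(\theta)$ is only needed for $\theta \in \mathcal{K}$.
\item Continuity of $DG$ at the limit point $\theta$ suffices; uniform continuity on $\mathcal{K}$ is not needed.
\item Convexity of $\mathcal{K}$ is not essential. Once $\theta_n, \widetilde{\theta}_n \to \theta \in \mathcal{U}$, the segments $[\theta_n, \widetilde{\theta}_n]$ eventually lie in a ball around $\theta$ contained in the open set $\mathcal{U}$. Your sentence saying convexity is ``exactly what'' keeps the segment inside the domain therefore overstates its role.
\end{itemize}

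Second, in your closing application, the choice $\mathcal{U} = \mathcal{V}$ is inconsistent with $\mathcal{K} = \Theta = \overline{\mathcal{V}}$, because a closed rectangle is not contained in the corresponding open one. Take $\mathcal{U} = \mathcal{A}$ instead. It is open and contains $\Theta$. It is also the set on which the paper establishes injectivity of $\Phi_T$ (Theorem~\ref{thm:frontier-propagation}) and of $D\Phi_T$ (Lemma~\ref{lemma:frechet-injectivity}).
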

	
	Under the hypotheses of Theorem \ref{thm:frontier-propagation} $\Phi_T$ is injective on $\mathcal{A}$, while the Lemma \ref{lemma:frechet-injectivity} gives injectivity of its derivative. Therefore there exists $K_{\text{stab}} > 0$ such that
	\begin{equation}
		\label{eq:simultaneous-recovery-stability}
		\left(
		\norm{M - \widetilde{M}}_{\ell^2}^2
		+
		\norm{f - \widetilde{f}}_{\ell^2}^2
		\right)^{1/2}
		\leq
		K_{\text{stab}}
		\norm{
			\Phi_T(M, f)
			-
			\Phi_T(\widetilde{M}, \widetilde{f})
		}_{L^2(0,T;\mathbb{R}^{|S_0|})}
	\end{equation}
	for every $(M, f),(\widetilde{M},\widetilde{f}) \in \Theta$.
	
	Now suppose that the measured data are
	\begin{equation*}
		y_e = \Phi_T(M, f) + n,
	\end{equation*}
	where
	\begin{equation*}
		n \in L^2(0,T;\mathbb{R}^{|S_0|})
	\end{equation*}
	is the measurement noise. Let $(M_e, f_e) \in \Theta$ be a reconstruction and define its residual by
	\begin{equation*}
		r = \Phi_T(M_e, f_e) - y_e.
	\end{equation*}
	Then
	\begin{equation*}
		\Phi_T(M_e, f_e) - \Phi_T(M, f) = n + r.
	\end{equation*}
	Applying \eqref{eq:simultaneous-recovery-stability} and the triangle inequality gives
	\begin{equation}
		\label{eq:simultaneous-recovery-stability-noise}
		\left(
		\norm{M - M_e}_{\ell^2}^2
		+
		\norm{f - f_e}_{\ell^2}^2
		\right)^{1/2}
		\leq
		K_{\text{stab}}
		\left(
		\norm{n}_{L^2(0,T;\mathbb{R}^{|S_0|})}
		+
		\norm{r}_{L^2(0,T;\mathbb{R}^{|S_0|})}
		\right).
	\end{equation}
	
	The estimate can also be expressed in terms of the original vertex speeds. Since $\Theta$ is compact and contained in $\mathcal{D}_{++}^N \times \mathbb{R}^N$, there are constants
	\begin{equation*}
		0 < m_{\min} \leq m_{\max}
	\end{equation*}
	such that
	\begin{equation*}
		m_i,m_{e,i} \in [m_{\min},m_{\max}]
	\end{equation*}
	for every $(M, f),(M_e, f_e) \in \Theta$ and every $i \in V$. Since
	\begin{equation*}
		c_i = m_i^{-1/2},
	\end{equation*}
	the mean value theorem gives
	\begin{equation*}
		|c_i - c_{e,i}|
		\leq
		\frac{1}{2m_{\min}^{3/2}}
		|m_i - m_{e,i}|.
	\end{equation*}
	Consequently,
	\begin{equation}
		\label{eq:simultaneous-speed-recovery-stability}
		\norm{C - C_e}_{\ell^2}
		\leq
		\frac{K_{\text{stab}}}{2m_{\min}^{3/2}}
		\left(
		\norm{n}_{L^2(0,T;\mathbb{R}^{|S_0|})}
		+
		\norm{r}_{L^2(0,T;\mathbb{R}^{|S_0|})}
		\right).
	\end{equation}
	In particular, if the reconstruction fits the measured data exactly, then $r = 0$ and
	\begin{equation*}
		\norm{C - C_e}_{\ell^2}
		\leq
		\frac{K_{\text{stab}}}{2m_{\min}^{3/2}}
		\norm{n}_{L^2(0,T;\mathbb{R}^{|S_0|})}.
	\end{equation*}
	Since $m_{\min} = c_{\max}^{-2}$, this becomes
	\begin{equation}
		\label{eq:simultaneous-speed-recovery-stability-abstract}
		\norm{C - C_e}_{\ell^2}
		\leq
		\frac{K_{\text{stab}}}{2}c_{\max}^3
		\norm{n}_{L^2(0,T;\mathbb{R}^{|S_0|})}.
	\end{equation}
	
	\begin{rmk}
		Although \eqref{eq:simultaneous-speed-recovery-stability-abstract} has the same form as the stability estimate obtained in our earlier work on the recovery of speeds alone \cite{MoradifamWaite2026a}, the constant $K_{\text{stab}}$ need not be the same. Proposition \ref{prop:bourgeois-theorem} gives the existence of such a constant on a fixed compact parameter set, and the resulting constant depends implicitly on both the parameter set and the parameter-to-data map. Since the parameter-to-data map for simultaneous recovery differs from that for the recovery of speeds alone, there is no reason to expect the corresponding constants to agree.
		
		This distinction may become more significant under refinement toward a continuous model. Stefanov and Uhlmann showed that the linearized simultaneous recovery of the sound speed and source in the corresponding continuous multiwave problem is unstable in every scale of Sobolev spaces \cite{StefanovUhlmann2013Instability}, whereas they obtained stability estimates for the recovery of either the source or the speed separately under suitable hypotheses \cite{StefanovUhlmann2013Recovery}. This suggests that, for graph discretizations approximating such a continuous problem, the simultaneous recovery constants should not in general be expected to remain uniformly bounded under refinement, even when the corresponding single-parameter inverse problems remain stable. We emphasize, however, that no limiting statement of this kind is proved here. The uniqueness statements hold for every $T>0$: solutions of the finite-dimensional graph wave equation are real analytic in time, so agreement of observed traces on any nontrivial time interval implies agreement for all time. This does not imply uniform stability as $T\to 0$, and the stability constant may deteriorate for short observation intervals.
	\end{rmk}

\section{Extension to weighted graphs}
\label{sec:weighted}
	
	The simultaneous-recovery results extend directly to connected weighted graphs with known, fixed positive edge weights. Let $L_\omega$ denote the weighted graph Laplacian. In the mass-form parametrization, the weighted graph wave equation is
	\begin{equation*}
		MX_{tt} + L_\omega X = 0.
	\end{equation*}
	Let $X$ and $\widetilde{X}$ be the solutions corresponding to $(M, f)$ and $(\widetilde{M}, \widetilde{f})$, respectively, and set
	\begin{equation*}
		W := X - \widetilde{X}.
	\end{equation*}
	Subtracting the two weighted equations gives
	\begin{equation}
		\label{eq:weighted-forced-wave}
		\widetilde{M}W_{tt} + L_\omega W = (\widetilde{M} - M)X_{tt}.
	\end{equation}
	
	The frontier-propagation argument extends directly. Since all edge weights are positive, $L_\omega$ has the same off-diagonal zero pattern as $L$. The eligible frontier sets are therefore unchanged. For each chosen frontier set $F_k$, define the weighted coupling matrix
	\begin{equation*}
		A_{k, \omega} := -R_{F_k}L_\omega R_{C_{k + 1}}^T.
	\end{equation*}
	Restricting \eqref{eq:weighted-forced-wave} to $F_k$ therefore gives under the inductive hypothesis that
	\begin{equation*}
		-A_{k, \omega}R_{C_{k + 1}}W
		=
		\sum_{u \in F_k}
		(\widetilde{m}_u - m_u)X_{tt, u}R_{F_k}E_{uu}\textbf{1}.
	\end{equation*}
	
	Letting $Q_{k, \omega}$ be a matrix whose columns span $\ker A_{k, \omega}^T$, multiplication by $Q_{k, \omega}^T$ gives as before that
	\begin{equation*}
		0 = \sum_{u \in F_k} (\widetilde{m}_u - m_u)X_{tt, u} Q_{k, \omega}^T R_{F_k}E_{uu}\textbf{1}.
	\end{equation*}
	Thus, if the weighted accelerations
	\begin{equation*}
		\left\{X_{tt, u} \mid u \in F_k\right\}
	\end{equation*}
	are linearly independent and
	\begin{equation*}
		Q_{k, \omega}^T R_{F_k}E_{uu}\textbf{1} \neq 0
	\end{equation*}
	for every $u \in F_k$, the frontier induction therefore proceeds exactly as in Theorem \ref{thm:frontier-propagation}.
	
	After the induction gives $X = \widetilde{X}$, the remaining masses are recovered in the same manner as Theorem \ref{thm:frontier-propagation}. The theorem therefore remains valid after replacing $L$, $A_k$, and $\mathcal{J}$ by $L_\omega$, $A_{k, \omega}$, and the corresponding generic set $\mathcal{J}_\omega$, respectively. The contour-constant corollary extends by the same replacements. The weighted $2$-witness result follows as a special case as well.
	
	For a connected graph with positive edge weights, $L_\omega$ is symmetric, positive semidefinite, and satisfies
	\begin{equation*}
		\ker L_\omega = \linspan\{\textbf{1}\}.
	\end{equation*}
	The arguments establishing generic simplicity of the generalized spectrum, generic linear independence of the frontier accelerations, and generic nonharmonicity therefore apply with $L_\omega$ in place of $L$.
	
	Finally, the linearized equation becomes
	\begin{equation*}
		MZ_{tt} + L_\omega Z = -H(X_M^f)_{tt}.
	\end{equation*}
	For the frontier-propagation result, injectivity of the Fréchet derivative follows from the same weighted frontier induction. The corresponding Lipschitz stability estimates therefore remain valid in the weighted setting.
	
\FloatBarrier
\section*{Acknowledgments} Amir Moradifam is partially supported by NSF grant DMS-1953620.
	
\subsection*{Use of artificial intelligence}
	
	OpenAI's ChatGPT 5.6 Sol was used during the preparation of this work to assist with editorial refinement, identification of relevant literature, the preparation of figures. The main theoretical results were written by the authors, who also verified and edited all output and citations produced by artificial intelligence. The authors assume responsibility for all content.
	
\printbibliography
\end{document}